\documentclass[10pt]{article}

\oddsidemargin 0in \textwidth 6.5in \textheight 8.5in \topmargin
-0.5in

\usepackage{amsmath}
\usepackage{amsfonts}
\usepackage{amsthm}
\usepackage{graphicx}
\usepackage{caption}
\usepackage{subcaption}
\usepackage{float}
\usepackage{enumerate}
%\usepackage[backend=biber,style=numeric]{biblatex}
%\addbibresource{reference.bib}
\newtheorem{theorem}{Theorem}
\newtheorem{lemma}[theorem]{Lemma}

\title{Adaptive mixed GMsFEM for flows in heterogeneous media}

\author{Ho Yuen Chan\thanks{Department of Mathematics, The Chinese University of Hong Kong, Shatin, Hong Kong SAR} \and 
Eric Chung\thanks{Department of Mathematics, The Chinese University of Hong Kong, Shatin, Hong Kong SAR} \and Yalchin Efendiev\thanks{Department of Mathematics  and Institute for Scientific Computation (ISC),
Texas A\&M University,
College Station, Texas 77843-3368, USA}}

\begin{document}

\maketitle

\begin{abstract}
In this paper, we present two adaptive methods for the basis enrichment of the mixed Generalized Multiscale Finite Element Method (GMsFEM) for solving the flow problem in heterogeneous media. We develop an a-posteriori error indicator which depends on the norm of a local residual operator. Based on this indicator, we construct an offline adaptive method to increase the number of basis functions locally in coarse regions with large local residuals. We also develop an online adaptive method which iteratively enriches the function space by adding new functions computed based on the residual of the previous solution and special minimum energy snapshots. We show theoretically and numerically the convergence of the two methods. The online method is, in general, better than the offline method as the online method is able to capture distant effects (at a cost of online computations), and both methods have faster convergence than a uniform enrichment. Analysis shows that the online method should start with certain number of initial basis functions in order to have the best performance. The numerical results confirm this and show further that with correct selection of initial basis functions, the convergence of the online method can be independent of the contrast of the medium. We consider cases with both very high and very low conducting inclusions and channels in our numerical experiments.
\end{abstract}

\section{Introduction}

Many real-world problems involve multiple scales and high contrast. To solve these problems, we often adopt some forms of model reduction such as upscaling and multiscale methods. These methods can reduce the degrees of freedom of a problem. For example, in upscaling methods \cite{durlofsky1991numerical, wu2002analysis,numerical-homo,2d-waves}, the multiscale media are upscaled so that the problem can be solved on a coarse grid. In multiscale methods \cite{arbogast2004analysis, chu2010new, engquist2013heterogeneous, efendiev2011multiscale, efendiev2009multiscale, efendiev2004multiscale, ghommem2013mode, chung2010reduced, chung2011energy, chung2014generalized, chung2013sub,GMsFEM-elastic,elastic-jcp,aarnes04,jennylt03}, basis functions are solved on a fine grid to capture the multiscale features of a medium and the problem is then solved on the coarse grid with these basis functions.

In this paper, we will present two adaptive enrichment algorithms for the generalized multiscale finite element method (GMsFEM) in solving the mixed framework of the flow problem in heterogeneous media \cite{chung2014mixed}. The first method is based on a local error indicator. We use this indicator to search for the regions, where more basis functions are needed. This method will only add pre-computed basis functions, which are computed in the offline stage so we call it an offline adaptive method. In the second method, new basis functions are computed based on the previous solutions. We call it an online adaptive method.

GMsFEM is a generalization of the classical multiscale finite element method \cite{hou1997multiscale}. In the classical method, one basis function per coarse edge is used to capture the multiscale features. For the multiscale mixed finite element method, one may see \cite{chen2003mixed, arbogast2000numerical,aarnes04}. GMsFEM allows more basis functions per coarse edge to be used to take into account the effects of non-separable scales. The main idea is to solve local spectral problems for the selection of basis functions. The formation of basis functions in GMsFEM can be divided into offline and online stages. In the offline stage, offline basis functions are computed based on the multiscale features so that these functions can be reused for any input parameters to solve the equation. Online functions are those depending on the parameters. In \cite{chung2014adaptive}, an adaptive algorithm is developed to enrich the space by adding basis functions which are formed in the offline stage. In \cite{chung2015residual}, adaptive methods which involve the formation of new online basis functions based on the previous solution are developed. These methods show significant acceleration in the convergence rate of GMsFEM. There are also related methods developed for the discontinuous Galerkin formulation in \cite{chung2014GMsDGM} and \cite{chung2015online}.

In the paper, we will focus on the mixed framework of the flow problem. The mixed methods are important for many applications, such as flows in porous media, where the mass conservation is essential.  We developed two adaptive methods to enrich the function space. One involves only offline basis functions while the other adds new online basis functions that are constructing using special minimum energy snapshots. We call them an offline and an online adaptive methods respectively. Two local spectral problems are developed for constructing multiscale basis functions. Both of them can be used in the online method, but only one can be used in the offline method. We propose error indicators which are based on the $\mathcal{L}^2$ and the $H(\text{div})$ norms of the local residual. These error indicators can be used to approximate the error of the solution. From \cite{chung2014mixed}, we know that the error between the GMsFEM solution and the fine grid solution involves two parts: one due to the selection of the basis functions and the other due to the discretization of the source function.  In this paper, we will assume the error due to the discretization of the source function is small and consider only the former part. The offline adaptive method depends on the error indicator to help the selection of basis functions. The online adaptive method produces new basis functions iteratively by projecting the previous solution on the space of divergence free functions. We emhasize that \cite{chung2014mixed} gives a-priori error estimate of the mixed GMsFEM, and the purposes of this paper are a-posteriori error estimates and adaptivity.

In our analysis, we prove the convergence of the two methods. It can be shown that the error is bounded by the local error indicators. By adding offline basis functions to those coarse grid edges with large error indicator, we can show a guaranteed convergence rate for the error of the solution together with the local error indicators. The convergence rate depends on the parameters of the offline adaptive method. These parameters control the number of coarse grid edges to be chosen and the number of basis functions to be added for each of those edges. For the online adaptive method, a set of non-overlapping subsets of the domain is selected. New basis functions are computed on each of these subsets. We show that the convergence rate depends on the norm of the residual operator restricted on those subsets, and also the eigenvalues of the offline basis functions that are not included in the initial basis.

We present some numerical results to show the convergence behaviour and some properties of the adaptive methods. We consider both high and low conductivity inclusions and channels in the domain. By comparing the adaptive methods to uniformly enriching the function space, one can see the efficiency of the adaptive methods. In particular, the performance of the online adaptive method is generally the best since it adds functions which are computed based on the previous solution while the offline adaptive method enriches the space by adding basis functions which are independent of the input parameters. We will see that both the choice of the non-overlapping regions and the initial number of basis functions on each coarse grid edge affect the convergence of the online adaptive method. Some of the eigenvalues from the spectral problems are sensitive to the contrast of the problem. By including functions corresponding to those eigenvalues in the initial basis, the convergence of 
online adaptive method becomes independent of the contrast.

The rest of the paper is organized in the following way. In the next section, we briefly introduce the basic idea of mixed GMsFEM. At the end of the section, we give the detail of the two adaptive methods. In Section \ref{section_analysis}, we state and prove the convergence results for the adaptive methods. In Section \ref{section_numerical}, numerical results are given to illustrate the convegence behaviour of the adaptive method and the factors affecting the convergence. The paper ends with a conclusion.

\section{Method description}

\subsection{Overview}
Consider the high-contrast flow problem in a mixed formulation:
\begin{equation}\label{equation}
\begin{aligned}
    \kappa^{-1}v + \nabla p &= 0 &&\mbox{in } D \\
    \mbox{div}(v) &= f &&\mbox{in } D, \\
\end{aligned}
\end{equation}
with Neumann boundary condition $v \cdot n = g$ on $\partial D$, where $\kappa$ is a high-contrast permeability field, $D$ is the computational domain in $\mathbb{R}^{n}$ and $n$ is the unit outward normal vector of the boundary of $D$.

We will solve the equation on two meshes with different scales. Let $\mathcal{T}^{H}$ be a partition of $D$ into finite elements (triangles, quadrilaterals, tetrahedra, etc.), where $H$ is the mesh size. We call $\mathcal{T}^{H}$ the coarse grid. Next we construct a finer grid. For each coarse grid element $K \in \mathcal{T}^{H}$, we further partition $K$ into a finer mesh such that the resulting partition $\mathcal{T}^{h}$ of $D$ with size $h$ is conforming across coarse-grid edges. We call $\mathcal{T}^{h}$ the fine grid. Denote the set of all faces of the coarse grid as $\mathcal{E}^{H}$, and let $N_{e}$ be the total number of faces of the coarse grid. We define the coarse grid neighborhood $\omega_{i}$ of a face $E_i \in \mathcal{E}^{H}$ as
$$\omega_{i} = \bigcup \{K \in \mathcal{T}^{H} : E_{i} \in \partial K \},$$
which is indeed a union of two coarse grid blocks.

Next, we define the notations for the solution spaces for pressure and velocity. Let $Q$ be the space of functions which are constant on each coarse grid block. {\it We will use this space to approximate $p$}. For the velocity space, we will first construct a set of basis functions $\beta^{(i)}_{\text{snap}}$ for each coarse grid neighborhood $\omega_{i}$. We call $V_{\text{snap}} = \bigoplus_{E_{i} \in \mathcal{E}^{H}} V_{\text{snap}}^{(i)}$ the snapshot space, where $V_{\text{snap}}^{(i)} =  \mbox{span} \left( \beta^{(i)}_{\text{snap}} \right)$. The snapshot space is an extensive set of functions which can be used to approximate the solution $v$. However, this space is large and we will reduce it to a smaller one before we solve the equation. From each $V^{(i)}_{\text{snap}}$, we select a set of basis functions $\beta^{(i)}_{\text{ms}}$. Denote $V_{\text{ms}}^{(i)} = \mbox{span} \left(\beta_{\text{ms}}^{(i)} \right)$ and $V_{\text{ms}} = \bigoplus_{E_{i} \in \mathcal{E}^{H}} V^{(i)}_{\text{ms}}$. The size of $V_{\text{ms}}$ is generally smaller than $V_{\text{snap}}$. We will use the space $V_{\text{ms}}$ to approximate the velocity $v$.

With the pressure space $Q$ and the velocity space $V_{\text{ms}}$, we solve for $p_{\text{ms}} \in Q$ and $v_{\text{ms}} \in V_{\text{ms}}$ such that
\begin{equation}\label{MsEquation}
\begin{aligned}
\int_{D} \kappa^{-1} v_{\text{ms}} \cdot w - \int_{D} \mbox{div}(w) p_{\text{ms}} &= 0 && \forall w \in V^{0}_{\text{ms}}, \\
\int_{D} \mbox{div}(v_{\text{ms}}) q &= \int_{D} fq && \forall q \in Q,
\end{aligned}
\end{equation}
with boundary condition $v_{\text{ms}} \cdot n = g_{H}$ on $\partial D$, where $V^{0}_{\text{ms}} = \{ v \in V_{\text{ms}} : v \cdot n = 0 \mbox{ on } \partial D\}$, and $g_{H}$ is the projection of $g$ in the sense that
\begin{equation*}
\int_{E_{i}} (g_{H} - g) \phi \cdot n = 0 \qquad \forall \phi \in \beta^{(i)}_{\text{snap}} \mbox{ and } E_{i} \subseteq \partial D,
\end{equation*}
and $g_H$ is constant on each fine grid face. 

For $\Omega \subset D$ and $v \in V_{\text{snap}}$, we define two norms $\|v\|_{\mathcal{L}^2(\Omega; \kappa^{-1})}$ and $\|v\|_{H(\text{div}; \Omega; \kappa^{-1})}$ as
$$ \|v\|_{\mathcal{L}^2(\Omega; \kappa^{-1})} = \left( \int_{\Omega} \kappa^{-1} |v|^2 \right)^{\frac{1}{2}}$$
and
$$ \|v\|_{H(\text{div}; \Omega; \kappa^{-1})} = \left( \int_{\Omega} \kappa^{-1} |v|^2 + \int_{\Omega} \text{div}(v)^2 \right)^{\frac{1}{2}}.$$
We will use these two norms in the adaptive methods.

In the coming sections, we will discuss the formation of the snapshot space $V_{\text{snap}}$ and the method of selecting the $\beta_{\text{ms}}^{(i)}$'s. We will also give two adaptive methods of enrichment of the multiscale space $V_{\text{ms}}$ so as to get a more accurate solution without using too many basis functions.

\subsection{Snapshot space}
In this section, we will present the construction of the snapshot space which is a large function space containing basis functions up to the resolution of fine grid faces on the coarse grid faces. We construct the local snapshot bases $\beta_{\text{snap}}^{(i)}$ by solving a local problem on each coarse grid neighbourhood $\omega_{i}$, and then combine the $\beta_{\text{snap}}^{(i)}$'s to form the snapshot space $V_{\text{snap}}$.

Let $E_{i} \in \mathcal{E}^{H}$. We write $E_{i} = \bigcup^{J_{i}}_{j = 1}e_{j}$, where the $e_{j}$'s are the fine grid faces contained in $E_{i}$ and $J_{i}$ is the total number of those fine grid faces. We will solve the following local problem to obtain $\beta^{(i)}_{\text{snap}}$,
\begin{equation} \label{localEquation}
\begin{aligned}
\kappa^{-1} v^{(i)}_{j} + \nabla p^{(i)}_{j} &= 0 &&\mbox{in } \omega_{i} , \\
\mbox{div} (v^{(i)}_{j}) &= \alpha^{(i)}_{j} &&\mbox{in } \omega_{i} ,
\end{aligned}
\end{equation}
subject to the homogeneous Neumann boundary condition $v^{(i)}_{j} \cdot n_{i} = 0$ on $\partial \omega_{i}$. We want the snapshot basis to contain solutions of the local problem with all possible boundary conditions on the edge $E_{i}$ up to the fine grid resolution. Therefore, the problem is solved separately on each coarse-grid element $K \subseteq \omega_{i}$ with the additional boundary condition $v_{(i)} \cdot n_{i} = \delta^{(i)}_{j}$ on $E_{i}$, where $\delta^{(i)}_{j}$ is defined by
\begin{equation*}
\delta^{(i)}_{j} =
\begin{cases}
1 &\mbox{on } e_{j}, \\
0 &\mbox{on } E_{i}\backslash e_{j},
\end{cases}
\end{equation*}
and $n_i$ is a fixed unit-normal vector for each face $E_i$. The function $\alpha_{j}^{(i)}$ is constant on each coarse grid block and it should satisfy the condition $\int_{K} \alpha_{j}^{(i)} = \int_{\partial K} v_{j}^{(i)} \cdot n_{i}$ for every $K \subseteq \omega_{i}$.

The set of solutions to (\ref{localEquation}) is the local snapshot basis $\beta_{\text{snap}}^{(i)}$. Using $\bigcup _{E_i \in \mathcal{E}^H} \beta_{\text{snap}}^{(i)}$ as a basis, we have the snapshot space $V_{\text{snap}}$.

\subsection{Offline space}
After we obtain the snapshot spaces $V_{\text{snap}}$, we perform a dimension reduction to get a smaller space. Such reduced space is called the offline space. The reduction is achieved by solving a local spectral problem on each coarse grid neighborhood $\omega_i$. Some of the eigenfunctions will be used to form the local basis $\beta_{\text{ms}}^{(i)}$. The local spectral problem is to find real number $\lambda$ and $v \in V$ such that
\begin{equation}\label{spectralProblem}
a(v, w) = \lambda s(v, w), \qquad \forall w \in V,
\end{equation}
where $V$ is the snapshot space, $a$ and $s$ are symmetric positive definite bilinear operators defined on $V \times V$.

We propose the following two possible spectral problems for the basis selection. In these two problems, $V$ is set to be $V_{\text{snap}}^{(i)}$.

\paragraph{Spectral problem 1:} We take
\begin{equation*}
\begin{aligned}
a_i(v, w) &= \int_{E_{i}} \kappa^{-1} (v \cdot n_{i})(w \cdot n_{i}), \\
s_i(v, w) &= \frac{1}{H} \left( \int_{\omega_{i}} \kappa^{-1} v \cdot w + \int_{\omega_{i}} \mbox{div}(v) \mbox{div}(w) \right),
\end{aligned}
\end{equation*}
for $v,\,w \in V_{\text{snap}}^{(i)}$, where $n_{i}$ is a fixed unit normal vector on $E_{i}$. The term $1/H$ is added so that $a_i$ and $s_i$ have the same scale. 
%make the spectral problem depend only on the ratio between $H$ and $h$ but not $H$ itself.

\paragraph{Spectral problem 2:} For $v \in V_{\text{snap}}^{(i)}$, we define $\tilde{v}$ to be the extension of $v \cdot n_i$ in $\bigoplus_{\omega_j \cap \omega_i \neq \phi} V_{\text{snap}}^{(j)}$ by minimizing the energy norm on $\omega_i$, i.e., we find $\tilde{v} \in \bigoplus_{\omega_j \cap \omega_i \neq \phi} V_{\text{snap}}^{(j)}$ such that $\tilde{v} \cdot n_i = v \cdot n_i$ on $E_i$ and $\|\tilde{v}\|_{\mathcal{L}^2(\omega_i; \kappa^{-1})} \leq \|\varphi\|_{\mathcal{L}^2(\omega_i; \kappa^{-1})}$ for all $\varphi \in \bigoplus_{\omega_j \cap \omega_i \neq \phi} V_{\text{snap}}^{(j)}$.
We take
\begin{equation*}
\begin{aligned}
a_i(v, w) &= \int_{\omega_{i}} \kappa^{-1} \tilde{v} \cdot \tilde{w}, \\
s_i(v, w) &= \int_{\omega_{i}} \kappa^{-1} v \cdot w.
\end{aligned}
\end{equation*}
Remark that the eigenvalues of this spectral problem is always bounded above by 1.

\paragraph{}
Note that spectral problem 1 involves only the space $V_{\text{snap}}^{(i)}$. However, in spectral problem 2, the calculation of $\tilde{v}$ involves all the $V_{\text{snap}}^{(j)}$'s such that $\omega_j$ and $\omega_i$ have non-empty intersection, and so spectral problem 2 requires slightly more calculation.
We note that $\tilde{v}$ is the minimum energy extension of $v\cdot n_i|_{E_i}$ in the space $\bigoplus_{\omega_j \cap \omega_i \neq \phi} V_{\text{snap}}^{(j)}$.

After solving the spectral problem in a coarse grid neighborhood $\omega_i$, we arrange the eigenfunctions $\phi_j^{(i)}$ in ascending order of the corresponding eigenvalues
$$ \lambda_1^{(i)} \leq \lambda_2^{(i)} \leq \cdots \leq \lambda_{J_i}^{(i)}.$$
We then let $\beta_{\text{ms}}^{(i)}$ be the set of the first $l_i$ eigenfunctions, where $l_i$ is a predefined integer or $l_i$ is depending on the eigenvalues. The selection of $l_i$ will be discussed in the analysis section. When the spectral problem is specified and the $l_i$'s are selected, we construct the offline space $V_{\text{ms}}$.

Suppose $l_i$ is fixed for each coarse grid neighborhood $\omega_i$. We define $\Lambda_{\min} = \min_{E_i \in \mathcal{E}^H} \lambda_{l_i+1}^{(i)}$. The value 
of   $(\Lambda_{\min})^{-1}$ indicates the error between the multiscale solution and the solution obtained by using the whole snapshot space. Therefore, we want $(\Lambda_{\min})^{-1}$ to be as small as possible. However, in spectral problem 2, we can see that $(\Lambda_{\min})^{-1}$ is bounded below by 1. Therefore, this spectral problem will only be used in the online adaptive method,
and is shown to be crucial in selecting initial bases. 

\subsection{Offline adaptive method}
In this section, we will introduce an error indicator on each coarse grid neighborhood. Based on this estimator, we develop an offline adaptive enrichment method to solve equation (\ref{equation}) iteratively by adding offline basis functions supported on some coarse grid neighborhoods in each iteration. In this offline adaptive method, we will use spectral problem 1.

For each coarse grid neighborhood $\omega_i$, we define the residual operator $R_i$ as a linear functional on $V_{\text{snap}}^{(i)}$ by
$$ R_i(v) = \int_{\omega_i} \kappa^{-1} v_{\text{ms}} \cdot v - \int_{\omega_i} \mbox{div} (v) p_{\text{ms}} \qquad \forall v \in V_{\text{snap}}^{(i)},$$
where $(v_{\text{ms}},\, p_{\text{ms}})$ is the solution pair of (\ref{MsEquation}).

We take $\| R_i \|_{(V_{\text{snap}}^{(i)})^*}^2 (\lambda_{l_i+1}^{(i)})^{-1}$ as our error indicator, where
$$\|R_i\|_{{V_{\text{snap}}^{(i)}}^*} = \sup_{v \in V_{\text{snap}}^{(i)}} \frac{|R_i(v)|}{\|v\|_{H(\text{div}; \omega_i; \kappa^{-1})}}.$$
This quantity can be used to approximate the error since we have
$$ \|v_{\text{snap}} - v_{\text{ms}}\|_{\mathcal{L}^2(D; \kappa^{-1})}^2 \leq C_{\text{err}} \sum_{E_i \in \mathcal{E}^H} \|R_i\|_{{V_{\text{snap}}^{(i)}}^*}^2 (\lambda_{l_i+1}^{(i)})^{-1},$$
where $v_{\text{snap}} \in V_{\text{snap}}$ is the solution obtained by solving the equation using the whole snapshot space $V_{\text{snap}}$. We will prove this inequality in the next section. 

With this error indicator, we now present the offline adaptive method. In this method, only spectral problem 1 will be used. We let $m \geq 0$ denote the enrichment level. For each coarse grid neighborhood $\omega_i$, we use $l_i^m$ to represent the number of eigenfunctions used to form $V_{\text{ms}}^{(i)}$. We use $V_{\text{ms}}^m$ to denote the space $V_{\text{ms}}$ at enrichment level $m$.

\paragraph{Offline adaptive method:} Fix the number $\theta$ and $\delta_0$ with $0 < \theta,\, \delta_0 < 1$. We start with iteration number $m = 0$. Fix an initial number of offline basis functions $l_i^0$ for each coarse grid neighborhood to form the offline space $V_{\text{ms}}^0$. Then, we go to step 1 below.

\begin{description}
\item[Step 1:] Find the multiscale solution. We solve  for the multiscale solution $v_{\text{ms}}^{m} \in V_{\text{ms}}^{m}$ and $p_{\text{ms}}^{m} \in Q$ satisfying
\begin{equation} \label{MsEquation_offline}
\begin{aligned}
\int_{D} \kappa^{-1} v_{\text{ms}}^m \cdot w - \int_{D} \mbox{div}(w) p_{\text{ms}}^m &= 0 && \forall w \in (V_{\text{ms}}^m)^0, \\
\int_{D} \mbox{div}(v_{\text{ms}}^m) q &= \int_{D} fq && \forall q \in Q.
\end{aligned}
\end{equation}
\item[Step 2:] Compute the error indicators. For each coarse grid neighborhood $\omega_{i}$, we compute the local residual $\eta_{i}^2 = \|R_{i}\|_{{V_{\text{snap}}^{(i)}}^{*}}^2 (\lambda_{l_i+1}^{(i)})^{-1}$ and rearrange the residuals in decreasing order $\eta_{1} \geq \eta_{2} \geq \cdots \geq \eta_{N_e}$.
\item[Step 3:] Select the coarse grid neighborhoods where basis enrichment is needed. We take the smallest $k$ such that
$$ \theta^2 \sum_{i=1}^{N_e} \eta_{i}^2 \leq \sum_{i=1}^{k} \eta_{i}^2.$$
We will enrich the offline space by adding basis functions which are supported in the coarse grid neighborhoods $\omega_1, \dots ,\, \omega_k$.
\item[Step 4:] Add basis functions to the space. For each of the selected coarse grid neighborhood $\omega_i$ from step 3, we will take the smallest $s_i$ such that $l_{l_i^m+1}^{(i)} / \lambda_{l_i^m+s_i+1}^{(i)} \leq \delta_0$. We then set $l_i^{m+1} = l_i^m + s_i$ so that $s_i$ more basis functions are included to form a larger space $V_{\text{ms}}^{m+1}$. For the other neighborhoods, we set $l_i^{m+1} = l_i^m$. We will explain the reason for choosing such $s_i$  in the analysis section. 
\end{description}

After Step 4, we repeat from Step 1 until the global error indicator $\sum_{i=1}^{N_e} \eta_{i}^2$ is small enough or the total number of bases reaches certain level. The calculations of all the local error indicators can be time consuming. However, since the error indicators are independent of each other, the computation can be done in a parallel approach.

\subsection{Online adaptive method}
Next, we will present another enrichment algorithm which requires the formation of new basis functions based on the solution of the previous enrichment level. We call these functions online basis functions as these basis functions are computed in the online stage of computations. With the addition of the online basis functions, we can get a much faster convergence rate than the offline adaptive method.

We first define a linear functional which generalizes the residual operator in the offline adaptive method. Given a region $\Omega \subseteq D$, let $V_{\Omega}$ be the space of functions in $V_{\text{snap}}$ which are supported in $\Omega$, i.e. $V_{\Omega} = \bigoplus_{\omega_i \subseteq \Omega} V_{\text{snap}}^{(i)}$. Let $\widehat{V}_{\Omega}$ denote the divergence free subspace of $V_{\Omega}$. Define the linear functional $R_{\Omega}$ on $V_{\Omega}$ by
$$ R_{\Omega}(v) = \int_{\Omega} \kappa^{-1} v_{\text{ms}}^m \cdot v - \int_{\Omega} \mbox{div}(v) p_{\text{ms}}^m \qquad \forall v \in V_{\Omega}.$$
The norm $\|R_{\Omega}\|_{V_{\Omega}^*}$, we use, is given by either
$$ \|R_{\Omega}\|_{V_{\Omega}^*} = \sup_{v \in V_{\Omega}} \frac{R_{\Omega}(v)}{\|v\|_{H(\text{div}; \Omega; \kappa^{-1})}} \quad \text{or} \quad \|R_{\Omega}\|_{V_{\Omega}^*} = \sup_{v \in V_{\Omega}} \frac{R_{\Omega}(v)}{\|v\|_{\mathcal{L}^2(\Omega; \kappa^{-1})}}$$
depending on which spectral problem is used. Remark that if we restrict $R_{\Omega}$ on $\widehat{V}_{\Omega}$, we have
$$ R_{\Omega}(v) = \int_{\Omega} \kappa^{-1} v_{\text{ms}}^m \cdot v \qquad \forall v \in \widehat{V}_{\Omega}.$$
For the case of $\Omega = \omega_i$ for some $i$, $R_{\Omega}$ is the same as the residual operator $R_i$ in the offline adaptive method. 

Similar to the offline adaptive method, we use $m$ to indicate the enrichment level and $V_{\text{ms}}^m$ to denote the velocity space at enrichment level $m$. In the online adaptive method, we can use either spectral problem 1 or 2. However, since the online basis functions constructed in each enrichment level are divergence free, we must ensure that before any enrichment of the space, there is at least one basis function supported on each coarse grid neighborhood, which is not divergence free.

\paragraph{Online adaptive method:}
Let $m = 0$. We start by choosing an initial number of offline basis functions, $l_i$, for each coarse grid neighborhood $\omega_i$. We use the first $l_i$ eigenfunctions from each coarse grid neighborhood $\omega_i$ to form the initial velocity space $V_{\text{ms}}^0$. If the first $l_i$ eigenfunctions of a coarse grid neighborhood are all divergence free, we may artifically construct an extra basis function that is not divergence free and include it into $V_{\text{ms}}^0$. We go to step 1 below.

\begin{description}
\item[Step 1:] Find the multiscale solution. We solve for the multiscale $v_{\text{ms}}^m \in V_{\text{ms}}^m$ and $p_{\text{ms}}^m \in Q$ as in step 1 of the offline adaptive method.

\item[Step 2:] Select non-overlapping regions. We pick non-overlapping regions $\Omega_1,\, \Omega_2,\, \dots ,\, \Omega_J \subseteq D$ such that each $\Omega_j$ is a union of some coarse grid neighborhoods.

\item[Step 3:] Solve for online basis functions. For each $\Omega_j$, we solve for $\phi_j \in \widehat{V}_{\Omega_j}$ such that
$$ R_{\Omega_j}(v) = \int_{\Omega_j} \kappa^{-1} \phi_j \cdot v \qquad \forall v \in \widehat{V}_{\Omega_j},$$ 
i.e., we solve for the Riesz representation of $R_{\Omega_j}$ in $\widehat{V}_{\Omega_j}$. Those $\phi_j$'s are the new online basis functions. We update the velocity space by setting $V_{\text{ms}}^{m+1} = V_{\text{ms}}^m \oplus \mbox{span} \{ \phi_1,\, \phi_2,\, \dots ,\, \phi_J \}$.
\end{description}

Again, after Step 3, we repeat from Step 1 until the global error indicator is small or we have certain number of basis functions.

In our calculation, after obtaining the online basis function $\phi_j$ in step 3 of the method, we sometimes normalize it before computing the matrix in the finite element method. We can see that $\phi_j$ is a projection of the multiscale solution $v_{\text{ms}}^m$ on the space $\widehat{V}_{\Omega_j}$. We have $\| \phi_j \|_{\mathcal{L}^2(D; \kappa^{-1})} = \| R_{\Omega_j} \|_{\widehat{V}_{\Omega_j}^*}$. When $v_{\text{ms}}^m$ is close to the snapshot solution (the solution $v_{\text{ms}}$ of equation (\ref{MsEquation}) with $V_{\text{ms}}$ being the whole snapshot space) in the region $\Omega_j$, by the first equation of (\ref{MsEquation}), the norm of the projection $\phi_j$ will be small. Adding $\phi_j$ directly into calculation will make the matrix in the calculation close to singular.

\section{Convergence analysis} \label{section_analysis}

In this section, we will present the proofs for the convergence of both the offline and the online adaptive method. We first define some notations that will appear in the results.

We denote the maximum number of faces of a coarse grid block by $N_{\mathcal{T}}$. Let $v_{\text{snap}} \in V_{\text{snap}}$ and $p_{\text{snap}} \in Q$ denote the snapshot solution, i.e. $v_{\text{snap}}$ and $p_{\text{snap}}$ satisfy
\begin{equation}\label{MsEquation_snapshot}
\begin{aligned}
\int_{D} \kappa^{-1} v_{\text{snap}} \cdot w - \int_{D} \mbox{div}(w) p_{\text{snap}} &= 0 && \forall w \in V^{0}_{\text{snap}}, \\
\int_{D} \mbox{div}(v_{\text{snap}}) q &= \int_{D} fq && \forall q \in Q,
\end{aligned}
\end{equation}
with $v_{\text{snap}} \cdot n = g_{H}$ on $\partial D$.

With these notations, we have the following result for the error indicator.

\begin{lemma} \label{lemma_upper_bound}
We have
$$\|v_{\text{snap}} - v_{\text{ms}}\|_{\mathcal{L}^2(D; \kappa^{-1})}^2 \leq C_{\text{err}} \sum_{i=1}^{N_e} \|R_{i}\|_{{V_{\text{snap}}^{(i)}}^*}^2 (\lambda_{l_i + 1}^{(i)})^{-1}.$$
If spectral problem 1 is used, then $C_{\text{err}} = \frac{C_{V}H}{h}$ and $\|\cdot\|_{V_{\text{snap}}^{(i)}}$ is $\|\cdot\|_{H(\text{div}; \omega_i; \kappa^{-1})}$. If spectral problem 2 is used, then $C_{\text{err}} = N_{\mathcal{T}}$ and $\|\cdot\|_{V_{\text{snap}}^{(i)}}$ is $\|\cdot\|_{\mathcal{L}^2(\omega_i; \kappa^{-1})}$. The value of $C_{V}$ depends on the polynomial order of the fine grid basis functions in $V_{\text{snap}}$.
\end{lemma}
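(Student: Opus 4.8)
The plan is to estimate the error $v_{\text{snap}} - v_{\text{ms}}$ by testing the difference of the two mixed systems~\eqref{MsEquation_snapshot} and~\eqref{MsEquation_offline} against a well-chosen test function, and then to control the resulting global residual by the local error indicators. First I would observe that $v_{\text{snap}} - v_{\text{ms}}$ is divergence free in a weak sense: subtracting the second equations of~\eqref{MsEquation_snapshot} and~\eqref{MsEquation_offline} gives $\int_D \mbox{div}(v_{\text{snap}} - v_{\text{ms}}) q = 0$ for all $q \in Q$, so since $\mbox{div}(V_{\text{snap}}) \subseteq Q$ we get $\mbox{div}(v_{\text{snap}} - v_{\text{ms}}) = 0$. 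Hence $\|v_{\text{snap}} - v_{\text{ms}}\|_{\mathcal{L}^2(D;\kappa^{-1})}^2 = \int_D \kappa^{-1}(v_{\text{snap}} - v_{\text{ms}})\cdot(v_{\text{snap}} - v_{\text{ms}})$, and because $v_{\text{snap}} - v_{\text{ms}}$ has zero normal trace on $\partial D$ (both satisfy the same boundary data $g_H$) and is divergence free, the pressure terms drop out when we test the first equation of~\eqref{MsEquation_snapshot} against it; what remains is $\|v_{\text{snap}} - v_{\text{ms}}\|_{\mathcal{L}^2(D;\kappa^{-1})}^2 = \int_D \kappa^{-1} v_{\text{ms}} \cdot (v_{\text{snap}} - v_{\text{ms}}) - \int_D \mbox{div}(v_{\text{snap}} - v_{\text{ms}}) p_{\text{ms}} = \sum_i R_i(v_{\text{snap}} - v_{\text{ms}})$ after decomposing the global integral into the overlapping neighborhood contributions (with a bounded overlap constant, which is where $N_{\mathcal{T}}$ enters for spectral problem~2, and the analogous combinatorial factor is absorbed into $C_V$ for spectral problem~1).

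Next I would bound each $R_i(v_{\text{snap}} - v_{\text{ms}})$. Writing $z = v_{\text{snap}} - v_{\text{ms}}$, decompose $z|_{\omega_i}$ (or, in the spectral-problem-2 case, its minimum-energy extension) in the eigenbasis $\{\phi_j^{(i)}\}$ of~\eqref{spectralProblem}. Since $R_i$ annihilates the first $l_i$ eigenfunctions — these span exactly the portion of $V_{\text{ms}}^{(i)}$ against which $(v_{\text{ms}}, p_{\text{ms}})$ is Galerkin-orthogonal by~\eqref{MsEquation_offline} — only the tail $j \geq l_i + 1$ survives, and on that tail the Rayleigh quotient gives $s_i(\Pi_i z, \Pi_i z) \leq (\lambda_{l_i+1}^{(i)})^{-1} a_i(\Pi_i z, \Pi_i z)$, where $\Pi_i$ is the projection onto the tail. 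Combining the Cauchy–Schwarz-type bound $|R_i(z)| \leq \|R_i\|_{(V_{\text{snap}}^{(i)})^*} \|z\|_{V_{\text{snap}}^{(i)}}$ with this spectral gap estimate and then using the scaling built into $s_i$ (the $1/H$ factor for spectral problem~1, giving the $H/h$ factor through the norm equivalence on fine-grid faces encoded in $C_V$), I would sum over $i$, apply discrete Cauchy–Schwarz across the index $i$, and recognize $\sum_i \|R_i\|^2 (\lambda_{l_i+1}^{(i)})^{-1}$ on the right and $\|z\|_{\mathcal{L}^2(D;\kappa^{-1})}^2$ (up to the overlap constant) on the left, then divide through.

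The main obstacle I anticipate is making the connection between the abstract norm $\|z\|_{V_{\text{snap}}^{(i)}}$ appearing in the definition of $\|R_i\|_{(V_{\text{snap}}^{(i)})^*}$ and the quantity $s_i(\Pi_i z, \Pi_i z)$ controlled by the eigenvalue, precisely and with the right constant. For spectral problem~2 this is clean because $s_i$ is literally $\|\cdot\|_{\mathcal{L}^2(\omega_i;\kappa^{-1})}^2$ and the eigenvalue bound by $1$ keeps things tame, so the only cost is the overlap multiplicity $N_{\mathcal{T}}$. For spectral problem~1, $s_i$ is $\frac{1}{H}\|\cdot\|_{H(\text{div};\omega_i;\kappa^{-1})}^2$, which matches the test-norm up to the $1/H$ scaling, but one must additionally justify that the normal trace $a_i$ is the correct "energy" against which the Rayleigh quotient is taken and that a trace/scaling inequality on the fine faces $e_j \subseteq E_i$ — contributing the factor $H/h$ and the polynomial-degree-dependent constant $C_V$ — is available; I would cite or prove this trace inequality as the one genuinely technical lemma, and otherwise the argument is bookkeeping of constants.
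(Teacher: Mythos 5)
Your proposal is correct and follows essentially the same route as the paper: divergence-freeness from the second equations, the Galerkin-orthogonality identity reducing the error to the residual, local decomposition with the first $l_i$ eigenmodes annihilated, the Rayleigh-quotient bound on the tail, and then the fine-face trace inequality (for spectral problem 1) or the minimum-energy extension plus overlap count (for spectral problem 2). The only blemishes are cosmetic: the energy identity should read $\|v_{\text{snap}}-v_{\text{ms}}\|^2 = -R_D(v_{\text{snap}}-v_{\text{ms}})$ (a sign that is absorbed once you pass to $|R_i(\cdot)|$), and the overlap constant $N_{\mathcal{T}}$ enters at the final summation $\sum_i\int_{\omega_i}\kappa^{-1}|z|^2 \le N_{\mathcal{T}}\|z\|_{\mathcal{L}^2(D;\kappa^{-1})}^2$ rather than in the decomposition of the residual, which is exact because the function, not the domain, is decomposed.
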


\begin{proof}
% show that div(v_ms - v_snap) = 0
For any $v \in V_{\text{snap}}$, we can see from the construction of $V_{\text{snap}}$ that $\mbox{div}(v)$ is constant on each coarse grid block. Hence, by the second equation of (\ref{MsEquation}) and (\ref{MsEquation_snapshot}), we have
\begin{equation*}
\begin{aligned}
\int_{D} \mbox{div} (v_{\text{snap}} - v_{\text{ms}})^{2} &= \int_{D} \mbox{div} (v_{\text{snap}}) \mbox{div} (v_{\text{snap}} - v_{\text{ms}}) \\
& \quad - \int_{D} \mbox{div}(v_{\text{ms}}) \mbox{div} (v_{\text{snap}} - v_{\text{ms}})\\
&= 0.
\end{aligned}
\end{equation*}
Thus, $\mbox{div}(v_{\text{ms}} - v_{\text{snap}}) = 0$.

% global residual operator
\noindent
Next, since $v_{\text{snap}} - v_{\text{ms}} \in V_{\text{snap}}$, we have
\begin{equation*}
\begin{aligned}
\int_D \kappa^{-1} |v_{\text{snap}} - v_{\text{ms}}|^2
&= \int_D \kappa^{-1} (v_{\text{snap}} - v_{\text{ms}}) \cdot (v_{\text{snap}} - v_{\text{ms}}) \\
& \quad - \int_D \mbox{div} (v_{\text{snap}} - v_{\text{ms}}) (p_{\text{snap}} - p_{\text{ms}}) \\
\end{aligned}
\end{equation*}
Using the first equation of (\ref{MsEquation_snapshot}), we get
\begin{equation*}
\begin{aligned}
\int_D \kappa^{-1} |v_{\text{snap}} - v_{\text{ms}}|^2
&= - \int_D \kappa^{-1} v_{\text{ms}} \cdot (v_{\text{snap}} - v_{\text{ms}}) + \int_D \mbox{div} (v_{\text{snap}} - v_{\text{ms}}) p_{\text{ms}}.
\end{aligned}
\end{equation*}
By definition, we can write
\begin{equation*}
\begin{aligned}
\int_D \kappa^{-1} |v_{\text{snap}} - v_{\text{ms}}|^2
&= -\langle R_D ,\, v_{\text{snap}} - v_{\text{ms}} \rangle.
\end{aligned}
\end{equation*}

% decompose v_snap - v_ms
We decompose $v_{\text{snap}} - v_{\text{ms}}$ as the sum of functions from the $V_{\text{snap}}^{(i)}$'s, i.e. $v_{\text{snap}} - v_{\text{ms}} = \sum_{E_i \in \mathcal{E}^H} v^{(i)}$ where $v^{(i)} \in V_{\text{snap}}^{(i)}$. Each $v^{(i)}$ can further be written as sum of two components: one in $V_{\text{ms}}^{(i)}$ and the other in $\mbox{span}(\beta_{\text{snap}}^{(i)} \setminus \beta_{\text{ms}}^{(i)})$. Let $v_{\text{r}}^{(i)}$ be the latter one. We get
\begin{equation*}
\begin{aligned}
\langle R_D ,\, v_{\text{snap}} - v_{\text{ms}} \rangle &= \sum_{i=1}^{N_e} \langle R_{i} ,\, v^{(i)} \rangle \\
&= \sum_{i=1}^{N_e} \langle R_{i} ,\, v_{\text{r}}^{(i)} \rangle. \\
\end{aligned}
\end{equation*}
Using the definition of the spectral problems, we get
\begin{equation*}
\begin{aligned}
 \sum_{i=1}^{N_e} \langle R_{i} ,\, v_{\text{r}}^{(i)} \rangle
&\leq \sum_{i=1}^{N_e} \|R_{i}\|_{({V_{\text{snap}}^{(i)}})^*} \|v_{\text{r}}^{(i)}\|_{V_{\Omega_j}} \\
&\leq \sum_{i=1}^{N_e} \|R_{i}\|_{({V_{\text{snap}}^{(i)}})^*} (s_i (v_r^{(i)},\, v_r^{(i)}))^{\frac{1}{2}},
\end{aligned}
\end{equation*}
where $\|\cdot\|_{V_{\omega_i}} = \|\cdot\|_{H(\text{div}; \omega_i; \kappa^{-1})}$ if spectral problem 1 is used, and $\|\cdot\|_{V_{\omega_i}} = \|\cdot\|_{\mathcal{L}^2(\omega_i; \kappa^{-1})}$ if spectral problem 2 is used.

Next, we consider the two spectral problems separately.

% spectral problem 1
\paragraph{Spectral problem 1:} For each $i$, we have
\begin{equation*}
\begin{aligned}
s_i(v_{\text{r}}^{(i)}, v_{\text{r}}^{(i)})
&\leq H (\lambda_{l_i+1}^{(i)})^{-1} a_i(v_{\text{r}}^{(i)},\, v_{\text{r}}^{(i)}) \\
&\leq H (\lambda_{l_i+1}^{(i)})^{-1}  \int_{E_i} \kappa^{-1} ((v_{\text{snap}} - v_{\text{ms}}) \cdot n_i)^2. \\
\end{aligned}
\end{equation*}
Thus, by Cauchy Schwarz inequality,

\begin{equation*}
\begin{aligned}
\langle R_D ,\, v_{\text{snap}} - v_{\text{ms}} \rangle 
&\leq \sqrt{H} \sum_{i=1}^{N_e} \|R_{i}\|_{({V_{\text{snap}}^{(i)}})^*} (\lambda_{l_i+1}^{(i)})^{-\frac{1}{2}} \left(\int_{E_i} \kappa^{-1} ((v_{\text{snap}} - v_{\text{ms}}) \cdot n_i)^2 \right)^{\frac{1}{2}}\\
&\leq \sqrt{H} \left( \sum_{i=1}^{N_e} \|R_{i}\|_{({V_{\text{snap}}^{(i)}})^*}^2 (\lambda_{l_i+1}^{(i)})^{-1} \right)^{\frac{1}{2}} \left( \sum_{i=1}^{N_e} \int_{E_i} \kappa^{-1} ((v_{\text{snap}} - v_{\text{ms}}) \cdot n_i)^2 \right)^{\frac{1}{2}} \\
&\leq \sqrt{\frac{C_{V}H}{h}} \left( \sum_{i=1}^{N_e} \|R_{i}\|_{(V_{\text{snap}}^{(i)})^*}^2 (\lambda_{l_i+1}^{(i)})^{-1} \right)^{\frac{1}{2}} \|v_{\text{snap}} - v_{\text{ms}}\|_{\mathcal{L}^2(D; \kappa^{-1})},
\end{aligned}
\end{equation*}
where 
$C_{V}$ is a constant depending on the polynomial order of the fine grid basis functions in $V_{\text{snap}}$.

% spectral problem 2
\paragraph{Spectral problem 2:} Similar to spectral problem 1, for each $j$, we have
\begin{equation*}
\begin{aligned}
s_i(v_{\text{r}}^{(i)}, v_{\text{r}}^{(i)})
&\leq (\lambda_{l_i+1}^{(i)})^{-1} \int_{\omega_i} \kappa^{-1} |\widetilde{v^{(i)}}|^2  \\
&\leq (\lambda_{l_i+1}^{(i)})^{-1} \int_{\omega_i} \kappa^{-1} |v_{\text{snap}} - v_{\text{ms}}|^2, \\
\end{aligned}
\end{equation*}
where we used the minimum energy property of $\widetilde{v^{(i)}}$. 
Therefore,
\begin{equation*}
\begin{aligned}
\langle R_D ,\, v_{\text{snap}} - v_{\text{ms}} \rangle
&\leq \left(\sum_{i=1}^{N_e} \|R_{i}\|_{(V_{\text{snap}}^{(i)})^*}^2 (\lambda_{l_i+1}^{(i)})^{-1} \right)^{\frac{1}{2}} \left( \sum_{i=1}^{N_e} \int_{\omega_i} \kappa^{-1} |v_{\text{snap}} - v_{\text{ms}}|^2 \right)^{\frac{1}{2}} \\
&\leq \sqrt{N_{\mathcal{T}}} \left(\sum_{i=1}^{N_e} \|R_{\Omega_j}\|_{(V_{\text{snap}}^{(i)})^*}^2 (\lambda_{l_i+1}^{(i)})^{-1} \right)^{\frac{1}{2}} \|v_{\text{snap}} - v_{\text{ms}}\|_{\mathcal{L}^2(D; \kappa^{-1})}.
\end{aligned}
\end{equation*}

\end{proof} 

From the proof of Lemma \ref{lemma_upper_bound}, the error between the multiscale solution and the snapshot solution is bounded above by the norm of the global residual operator $R_D$, which in turn can be estimated by the sum of the error indicator $\|R_i\|_{(V_{\text{snap}}^{(i)})^*} (\lambda_{l_i + 1}^{(i)})^{-1}$. 

Next, before we show the convergence of the offline adaptive method, we need a local version of the inf-sup condition in \cite{chung2014mixed}. We will show the proof of this simplier case and compute the constant in the result. Here is the statement.

\begin{lemma} \label{lemma_infsup}
For coarse grid neighborhood $\omega_i$, write $\omega_i = K_1 \cup K_2$ where $K_1$ and $K_2$ are the two coarse grid blocks composing $\omega_i$. Then, for any $p \in Q$, we have
$$ \|p - \overline{p}\|_{\mathcal{L}^2(\omega_i)} = C_{\text{sup}}^{i} \sup_{v \in V_{\text{ms}}^{(i)}} \frac{ \int_{\omega_i} \text{div}(v) p }{ \|v\|_{\mathcal{L}^2(\omega_i; \kappa^{-1})} }, $$
where $\overline{p} = \frac{1}{|\omega_i|} \int_{\omega_i} p$ and $C_{\text{sup}}^i$ is the infimum of $\sqrt{\frac{|K_1||K_2|}{|K_1| + |K_2|}}\|v\|_{\mathcal{L}^2(\omega_i; \kappa^{-1})}$ over all $v \in V_{\text{ms}}^{(i)}$ with  $\int_{E_i} v \cdot n_i = 1$.
\end{lemma}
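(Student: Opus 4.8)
The plan is to reduce both sides of the claimed identity to explicit expressions in the two constant values of $p$ on the blocks $K_1$ and $K_2$, using the single scalar $c(v) := \int_{E_i} v\cdot n_i$ to parametrize the relevant behaviour of a velocity field $v\in V_{\text{ms}}^{(i)}$. First I would record the structure of $\text{div}(v)$ for $v\in V_{\text{ms}}^{(i)}\subseteq V_{\text{snap}}^{(i)}$: from the construction of the snapshot space, $\text{div}(v)$ is constant on each coarse block and $v\cdot n_i = 0$ on $\partial\omega_i$. Fixing $n_i$ to point from $K_1$ into $K_2$ and applying the divergence theorem on $K_1$ and on $K_2$ separately, the only surviving boundary contribution is the flux through $E_i$, which enters the two blocks with opposite signs; hence $\text{div}(v)\equiv c(v)/|K_1|$ on $K_1$ and $\text{div}(v)\equiv -c(v)/|K_2|$ on $K_2$.

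Writing $p\equiv p_1$ on $K_1$ and $p\equiv p_2$ on $K_2$, this immediately gives $\int_{\omega_i}\text{div}(v)\,p = c(v)(p_1-p_2)$. A direct computation of $p-\overline p$ on each block, with $\overline p = (|K_1|p_1+|K_2|p_2)/(|K_1|+|K_2|)$, gives $\|p-\overline p\|_{\mathcal{L}^2(\omega_i)} = \sqrt{|K_1||K_2|/(|K_1|+|K_2|)}\;|p_1-p_2|$. Since $V_{\text{ms}}^{(i)}$ is a linear space and $v\mapsto c(v)$ is linear, rescaling and sign-flipping $v$ shows
\[
\sup_{v\in V_{\text{ms}}^{(i)}} \frac{\int_{\omega_i}\text{div}(v)\,p}{\|v\|_{\mathcal{L}^2(\omega_i;\kappa^{-1})}} = |p_1-p_2|\, \Big( \inf_{v\in V_{\text{ms}}^{(i)},\ c(v)=1} \|v\|_{\mathcal{L}^2(\omega_i;\kappa^{-1})} \Big)^{-1}.
\]
Multiplying this by the definition $C_{\text{sup}}^i = \sqrt{|K_1||K_2|/(|K_1|+|K_2|)}\cdot \inf_{v:\,c(v)=1}\|v\|_{\mathcal{L}^2(\omega_i;\kappa^{-1})}$ cancels the infimum and reproduces $\|p-\overline p\|_{\mathcal{L}^2(\omega_i)}$, which is the claim. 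When $p_1=p_2$ both sides vanish and there is nothing to prove.

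The computation itself is routine; the two points that need care are the orientation bookkeeping in the divergence-theorem step — the flux through $E_i$ enters $K_1$ and $K_2$ with opposite signs, and it is exactly this that produces the factor $p_1-p_2$ rather than something like $|p_1|+|p_2|$ — and the fact that the infimum defining $C_{\text{sup}}^i$ (and the one in the displayed identity) is taken over a nonempty set. The latter holds because, under the standing assumption of the method, every coarse grid neighborhood carries at least one basis function that is not divergence free, equivalently one with $c(v)\neq 0$. I expect this nonemptiness remark to be the main (mild) obstacle to state cleanly, since it is the only place where a hypothesis on the construction of $V_{\text{ms}}^{(i)}$ enters.
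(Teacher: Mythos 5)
Your proposal is correct and takes essentially the same route as the paper's proof: both use the divergence theorem to factor $\int_{\omega_i}\mathrm{div}(v)\,p$ as $(p_1-p_2)\int_{E_i}v\cdot n_i$ (where $p_1,p_2$ are the constant values of $p$ on $K_1,K_2$) and reduce the identity to the elementary computation $\|p-\overline{p}\|_{\mathcal{L}^2(\omega_i)}=\sqrt{|K_1||K_2|/(|K_1|+|K_2|)}\,|p_1-p_2|$, after which the infimum over unit-flux fields cancels against the definition of $C_{\text{sup}}^i$. The only cosmetic difference is that the paper first normalizes $p$ (setting $\overline{p}=0$ and $\|p_0\|_{\mathcal{L}^2(\omega_i)}=1$) and extracts the factor $\pm\sqrt{1/|K_1|+1/|K_2|}$ from the two constraint equations, whereas you compute $\|p-\overline{p}\|$ directly; your remark on the nonemptiness of the constraint set is a sensible precision that the paper leaves implicit.
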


\begin{proof}
Note that
$$ \int_{\omega_i} \text{div}(v) \overline{p} = \overline{p} \int_{\partial \omega_i} v \cdot n = 0. $$
We may assume $\overline{p} = 0$. Let $p_0 = p / \|p\|_{\mathcal{L}^2(\omega_i)}$. Using this notation, we have
\begin{equation*}
\begin{aligned}
\int_{\omega_i} \text{div}(v) p &= \|p\|_{\mathcal{L}^2(\omega_i)} \left( p_0 |_{K_1} \int_{K_1} \text{div}(v) + p_0 |_{K_2} \int_{K_2} \text{div}(v) \right) \\
&=  \|p\|_{\mathcal{L}^2(\omega_i)} ( p_0 |_{K_1} - p_0 |_{K_2}) \int_{E_i} v \cdot n_i
\end{aligned}
\end{equation*}
Finally, we evaluate $(p_0 |_{K_1} - p_0 |_{K_2})$. Since $\overline{p}_0 = 0$, we have
$$ |K_1| (p_0 |_{K_1}) + |K_2| (p_0 |_{K_2}) = 0.$$
Using $\|p_0\|_{\mathcal{L}^2(\omega_i)} = 1$, we get
$$ |K_1| (p_0 |_{K_1})^2 + |K_2| (p_0 |_{K_2})^2 = 1.$$
Using these two, one can check that
$$p_0 |_{K_1} - p_0 |_{K_2} = \pm \sqrt{\frac{1}{|K_1|} + \frac{1}{|K_2|}}.$$
Hence, we have
$$ \sup_{v \in V_{\text{ms}}^{(i)}} \frac{ \int_{\omega_i} \text{div}(v) p }{ \|v\|_{\mathcal{L}^2(\omega_i; \kappa^{-1})} }
=  \|p\|_{\mathcal{L}^2(\omega_i)} \sqrt{\frac{1}{|K_1|} + \frac{1}{|K_2|}} \sup_{v \in V_{\text{ms}}^{(i)}} \frac{\int_{E_i} v \cdot n_i}{\|v\|_{\mathcal{L}^2(\omega_i; \kappa^{-1})}}, $$
which completes the proof.
\end{proof}

We define some symbols before we move on to the proof of the convergence. Let $R_i^m$ denote the residual operator $R_i$ using the solution $(v_{\text{ms}}^m,\, p_{\text{ms}}^m)$. We define
\begin{equation}
\label{eq:Si}
S_i^m = \|R_i^m\|_{(V_{\text{snap}}^{(i)})^*} (\lambda_{l_i^{m+1}}^{(i)})^{-\frac{1}{2}}.
\end{equation}
This symbol $S_i^m$ is indeed the error indicator $\eta_i$ at the $m$-th enrichment level. We have the following lemma for this symbol.

\begin{lemma} \label{lemma_offline}
Let $S_i^m$ be the expression defined in (\ref{eq:Si}). Then, for any $\alpha > 0$, we have 
$$ (S_i^{m+1})^2 \leq (1+\alpha) \frac{\lambda_{l_i^m+1}^{(i)}}{\lambda_{l_i^{m+1}+1}^{(i)}} (S_i^m)^2 + (1+\alpha^{-1}) D_m^i \| v_{\text{ms}}^{m+1} - v_{\text{ms}}^m \|_{\mathcal{L}^2(\omega_i; \kappa^{-1})}^2,$$
where $D_m^i = 2 (\lambda_{l_i^{m+1}+1}^{(i)})^{-1} (\max \{C_{\text{sup}}^{i, m}, 1  \})^2$ and $C_{\text{sup}}^{i, m}$ is the constant from Lemma \ref{lemma_infsup} at the $m$-th enrichment level.
\end{lemma}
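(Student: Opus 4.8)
The plan is to write $R_i^{m+1}=R_i^m+\Delta_i^m$, where $\Delta_i^m$ is the linear functional on $V_{\text{snap}}^{(i)}$ given by $\Delta_i^m(v)=\int_{\omega_i}\kappa^{-1}(v_{\text{ms}}^{m+1}-v_{\text{ms}}^m)\cdot v-\int_{\omega_i}\mbox{div}(v)(p_{\text{ms}}^{m+1}-p_{\text{ms}}^m)$, and to show that $\|\Delta_i^m\|_{(V_{\text{snap}}^{(i)})^*}$ is controlled by $\|v_{\text{ms}}^{m+1}-v_{\text{ms}}^m\|_{\mathcal{L}^2(\omega_i;\kappa^{-1})}$ alone. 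Granting this, the triangle inequality for the dual norm gives $\|R_i^{m+1}\|_{(V_{\text{snap}}^{(i)})^*}\le\|R_i^m\|_{(V_{\text{snap}}^{(i)})^*}+\|\Delta_i^m\|_{(V_{\text{snap}}^{(i)})^*}$, and then Young's inequality in the form $(a+b)^2\le(1+\alpha)a^2+(1+\alpha^{-1})b^2$, followed by division by $\lambda_{l_i^{m+1}+1}^{(i)}$, yields the stated estimate; the ratio $\lambda_{l_i^m+1}^{(i)}/\lambda_{l_i^{m+1}+1}^{(i)}$ appears only from the bookkeeping $(S_i^m)^2=\|R_i^m\|_{(V_{\text{snap}}^{(i)})^*}^2(\lambda_{l_i^m+1}^{(i)})^{-1}$ versus $(S_i^{m+1})^2=\|R_i^{m+1}\|_{(V_{\text{snap}}^{(i)})^*}^2(\lambda_{l_i^{m+1}+1}^{(i)})^{-1}$.

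To estimate $\Delta_i^m$, I would first observe that every $v\in V_{\text{snap}}^{(i)}$ has $\int_{\omega_i}\mbox{div}(v)=\int_{\partial\omega_i}v\cdot n_i=0$, so the pressure term in $\Delta_i^m(v)$ is unchanged if $q:=p_{\text{ms}}^{m+1}-p_{\text{ms}}^m$ is replaced by $q-\overline q$, with $\overline q$ the mean of $q$ over $\omega_i$. The crux is to bound $\|q-\overline q\|_{\mathcal{L}^2(\omega_i)}$. Subtracting the first equations of (\ref{MsEquation_offline}) at levels $m$ and $m+1$ and testing against the zero-extension of an arbitrary $w\in V_{\text{ms}}^{m,(i)}$ — legitimate because $V_{\text{ms}}^m\subseteq V_{\text{ms}}^{m+1}$ — gives the Galerkin relation $\int_{\omega_i}\mbox{div}(w)q=\int_{\omega_i}\kappa^{-1}(v_{\text{ms}}^{m+1}-v_{\text{ms}}^m)\cdot w$ for all $w\in V_{\text{ms}}^{m,(i)}$. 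Inserting this into the identity of Lemma \ref{lemma_infsup} (used at enrichment level $m$, hence with the constant $C_{\text{sup}}^{i,m}$) and applying Cauchy--Schwarz gives $\|q-\overline q\|_{\mathcal{L}^2(\omega_i)}\le C_{\text{sup}}^{i,m}\|v_{\text{ms}}^{m+1}-v_{\text{ms}}^m\|_{\mathcal{L}^2(\omega_i;\kappa^{-1})}$.

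Writing $B:=\|v_{\text{ms}}^{m+1}-v_{\text{ms}}^m\|_{\mathcal{L}^2(\omega_i;\kappa^{-1})}$, one then has, for $v\in V_{\text{snap}}^{(i)}$, $|\Delta_i^m(v)|\le\|v\|_{\mathcal{L}^2(\omega_i;\kappa^{-1})}\,B+\|\mbox{div}(v)\|_{\mathcal{L}^2(\omega_i)}\,C_{\text{sup}}^{i,m}B$. Applying the Cauchy--Schwarz inequality to this two-term sum — pairing $(\|v\|_{\mathcal{L}^2(\omega_i;\kappa^{-1})},\|\mbox{div}(v)\|_{\mathcal{L}^2(\omega_i)})$ with $(B,C_{\text{sup}}^{i,m}B)$, rather than using the triangle inequality — gives $|\Delta_i^m(v)|\le\|v\|_{H(\text{div};\omega_i;\kappa^{-1})}\,B\sqrt{1+(C_{\text{sup}}^{i,m})^2}\le\sqrt2\,\max\{C_{\text{sup}}^{i,m},1\}\,\|v\|_{H(\text{div};\omega_i;\kappa^{-1})}\,B$, hence $\|\Delta_i^m\|_{(V_{\text{snap}}^{(i)})^*}^2\le 2(\max\{C_{\text{sup}}^{i,m},1\})^2 B^2=\lambda_{l_i^{m+1}+1}^{(i)}D_m^i B^2$. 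Combining this with the Young-inequality step of the first paragraph finishes the proof.

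The main obstacle is the treatment of the pressure term: one must check that the zero-mean reduction of $q$ is valid (it is, by the divergence-free integral property of $V_{\text{snap}}^{(i)}$), and that the Galerkin relation only holds when testing against $V_{\text{ms}}^{m,(i)}$, which forces Lemma \ref{lemma_infsup} to be applied with the level-$m$ multiscale space and hence the constant $C_{\text{sup}}^{i,m}$. A secondary point is that one must use Cauchy--Schwarz on the scalar pair above — not the triangle inequality — in order to land the constant $2$ (through $1+(C_{\text{sup}}^{i,m})^2\le 2(\max\{C_{\text{sup}}^{i,m},1\})^2$) rather than a worse one. For a boundary edge $E_i\subseteq\partial D$ the zero-extension test-function argument is adapted exactly as in the global inf-sup analysis of \cite{chung2014mixed}.
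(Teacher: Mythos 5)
Your proposal is correct and follows essentially the same route as the paper's proof: the same decomposition $R_i^{m+1}=R_i^m+\Delta_i^m$, the same mean-subtraction of the pressure increment justified by $\int_{\omega_i}\mbox{div}(v)=0$, the same use of the Galerkin orthogonality together with Lemma \ref{lemma_infsup} at level $m$ to control $\|q-\overline q\|_{\mathcal{L}^2(\omega_i)}$, the same Cauchy--Schwarz pairing to produce $\sqrt{2}\max\{C_{\text{sup}}^{i,m},1\}$, and the same Young-inequality step. Your bookkeeping of the eigenvalue exponents is in fact cleaner than the paper's (which has a sign typo in the definition of the intermediate quantity $I$); no gaps.
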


\begin{proof}
For any $v \in V_{\text{snap}}^{(i)}$, using the definition of $R_i^m$, we have
\begin{equation*}
\begin{aligned}
R_i^{m+1} (v) 
&= R_i^m(v) + \int_{\omega_i} \kappa^{-1} (v_{\text{ms}}^{m+1} - v_{\text{ms}}^{m}) \cdot v - \int_{\omega_i} \text{div}(v) (p_{\text{ms}}^{m+1} - p_{\text{ms}}^{m}). \\
\end{aligned}
\end{equation*}
Taking supremum with respect to $v$ and noting that $V_{\text{snap}}^{(i+1)}=V_{\text{snap}}^{(i)}$, we get
$$ S_i^{m+1} \leq \left( \frac{\lambda_{l_i^m+1}^{(i)}}{\lambda_{l_i^{m+1}+1}^{(i)}} \right)^{\frac{1}{2}} S_i^m + I,$$
where
$$I = (\lambda_{l_i^{m+1}+1}^{(i)})^{\frac{1}{2}} \sup_{v\in V_{\text{snap}}^{(i)}} \frac{ \int_{\omega_i} \kappa^{-1} (v_{\text{ms}}^{m+1} - v_{\text{ms}}^{m}) \cdot v - \int_{\omega_i} \text{div}(v) (p_{\text{ms}}^{m+1} - p_{\text{ms}}^{m}) }{ \|v\|_{H(\text{div}; \omega_i; \kappa^{-1})} }.$$
Next, we have 
\begin{equation*}
\begin{aligned}
& \quad \int_{\omega_i} \kappa^{-1} (v_{\text{ms}}^{m+1} - v_{\text{ms}}^{m}) \cdot v - \int_{\omega_i} \text{div}(v) (p_{\text{ms}}^{m+1} - p_{\text{ms}}^{m}) \\
&= \int_{\omega_i} \kappa^{-1} (v_{\text{ms}}^{m+1} - v_{\text{ms}}^{m}) \cdot v - \int_{\omega_i} \text{div}(v) (p_{\text{ms}}^{m+1} - p_{\text{ms}}^{m} - \overline{(p_{\text{ms}}^{m+1} - p_{\text{ms}}^{m})}) \\
&\leq \| v_{\text{ms}}^{m+1} - v_{\text{ms}}^m \|_{\mathcal{L}^2(\omega_i; \kappa^{-1})} \|v\|_{\mathcal{L}^2(\omega_i; \kappa^{-1})} + \|\text{div}(v)\|_{\mathcal{L}^2(\omega_i)} \|p_{\text{ms}}^{m+1} - p_{\text{ms}}^m - \overline{(p_{\text{ms}}^{m+1} - p_{\text{ms}}^{m})}\|_{\mathcal{L}^2(\omega_i)}
\end{aligned}
\end{equation*}
where $\overline{(p_{\text{ms}}^{m+1} - p_{\text{ms}}^{m})}$ is the average value of $p_{\text{ms}}^{m+1} - p_{\text{ms}}^{m}$ over $\omega_i$.
Using Lemma \ref{lemma_infsup}, we get
\begin{equation*}
\begin{aligned}
 \|p_{\text{ms}}^{m+1} - p_{\text{ms}}^m - \overline{(p_{\text{ms}}^{m+1} - p_{\text{ms}}^{m})}\|_{\mathcal{L}^2(\omega_i)} 
&\leq C_{\text{sup}}^{i, m} \sup_{v \in V_{\text{ms}}^{(i), m}} \frac{ \int_D \text{div} (v) (p_{\text{ms}}^{m+1} - p_{\text{ms}}^m) }{ \|v\|_{\mathcal{L}^2(\omega_i; \kappa^{-1})} }\\
&\leq  C_{\text{sup}}^{i, m} \sup_{v \in V_{\text{ms}}^{(i), m}} \frac{ \int_D \kappa^{-1} (v_{\text{ms}}^{m+1} - v_{\text{ms}}^m) \cdot v }{ \|v\|_{\mathcal{L}^2(\omega_i; \kappa^{-1})} }\\
&\leq C_{\text{sup}}^{i, m} \| v_{\text{ms}}^{m+1} - v_{\text{ms}}^m \|_{\mathcal{L}^2(\omega_i; \kappa^{-1})},
\end{aligned}
\end{equation*}
where $C_{\text{sup}}^{i, m}$ and $V_{\text{ms}}^{(i), m}$ denote the constant $C_{\text{sup}}^i$ in Lemma \ref{lemma_infsup} and the space $V_{\text{ms}}^{(i)}$ at enrichment level $m$. Hence, we estimate $I$ as
$$ I \leq (\lambda_{l_i^{m+1}+1}^{(i)})^{-\frac{1}{2}} \sqrt{2} \max \{C_{\text{sup}}^{i, m}, 1  \} \| v_{\text{ms}}^{m+1} - v_{\text{ms}}^m \|_{\mathcal{L}^2(\omega_i; \kappa^{-1})}. $$
Therefore we have
$$ S_i^{m+1} \leq \left( \frac{\lambda_{l_i^m+1}^{(i)}}{\lambda_{l_i^{m+1}+1}^{(i)}} \right)^{\frac{1}{2}} S_i^m + (\lambda_{l_i^{m+1}+1}^{(i)})^{-\frac{1}{2}} \sqrt{2} \max \{C_{\text{sup}}^{i, m}, 1  \} \| v_{\text{ms}}^{m+1} - v_{\text{ms}}^m \|_{\mathcal{L}^2(\omega_i; \kappa^{-1})}. $$
And so, we get
$$ (S_i^{m+1})^2 \leq (1+\alpha) \frac{\lambda_{l_i^m+1}^{(i)}}{\lambda_{l_i^{m+1}+1}^{(i)}} (S_i^m)^2 + (1+\alpha^{-1}) D_m^i \| v_{\text{ms}}^{m+1} - v_{\text{ms}}^m \|_{\mathcal{L}^2(\omega_i; \kappa^{-1})}^2,$$
where $D_m^i = 2 (\lambda_{l_i^{m+1}+1}^{(i)})^{-1} (\max \{C_{\text{sup}}^{i, m}, 1  \})^2$.
\end{proof}

Using this lemma, we have the following result for the convergence of the offline adaptive method.

\begin{theorem} \label{theorem_offline}
Using the notations in the offline adaptive method, there exist positive constants $\delta_0$, $\rho$ and a decreasing sequence of positive numbers $\{L_j\}$ such that the following contracting property holds
$$  \|v_{\text{snap}} - v_{\text{ms}}^{m+1}\|_{\mathcal{L}^2(D; \kappa^{-1})}^2 + \frac{1}{L_j} \sum_{i = 1}^{N_e} (S_i^{m+1})^2 \leq \epsilon_j \left( \|v_{\text{snap}} - v_{\text{ms}}^{m}\|_{\mathcal{L}^2(D; \kappa^{-1})}^2 + \frac{1}{L_j} \sum_{i = 1}^{N_e} (S_i^m)^2  \right),$$
for any $j \leq m$, where $\delta_0$ and $\rho$ satisfy
$$ \frac{\lambda_{l_i^m+1}^{(i)}}{\lambda_{l_i^{m+1}+1}^{(i)}} \leq \delta_0 < 1 - (1 - \delta_0)\theta^2 < \rho < 1,$$
for any coarse grid neighborhood $\omega_i$ that are selected to add basis functions, and 
$$ \epsilon_j = \frac{C_{\text{err}} L_j + \rho}{C_{\text{err}} L_j + 1}.$$
The definition of $\{L_j\}$ is given by (\ref{definition_L}).
\end{theorem}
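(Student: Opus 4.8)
The plan is to track the two quantities $a_m:=\|v_{\text{snap}}-v_{\text{ms}}^m\|_{\mathcal{L}^2(D;\kappa^{-1})}^2$ and $b_m:=\sum_{i=1}^{N_e}(S_i^m)^2$ simultaneously, to derive a coupled pair of recursions relating their values at levels $m$ and $m+1$, and then to obtain the weighted contraction by elementary algebra. The first ingredient is an orthogonality (Pythagoras) identity for $a_m$. Since $V_{\text{ms}}^m\subseteq V_{\text{ms}}^{m+1}\subseteq V_{\text{snap}}$, the computation at the beginning of the proof of Lemma~\ref{lemma_upper_bound} applies unchanged and shows $\text{div}(v_{\text{snap}}-v_{\text{ms}}^m)=0$ and $\text{div}(v_{\text{ms}}^{m+1}-v_{\text{ms}}^m)=0$. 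Testing the first equations of (\ref{MsEquation_snapshot}) and of (\ref{MsEquation_offline}) at level $m+1$ against the admissible function $v_{\text{ms}}^{m+1}-v_{\text{ms}}^m\in(V_{\text{ms}}^{m+1})^0$ (the pressure terms drop because the divergence vanishes) and subtracting gives $\int_D\kappa^{-1}(v_{\text{snap}}-v_{\text{ms}}^{m+1})\cdot(v_{\text{ms}}^{m+1}-v_{\text{ms}}^m)=0$, hence $a_m=a_{m+1}+\|v_{\text{ms}}^{m+1}-v_{\text{ms}}^m\|_{\mathcal{L}^2(D;\kappa^{-1})}^2$. In particular $a_{m+1}\le a_m$, and Lemma~\ref{lemma_upper_bound} applied at level $m+1$ gives $a_{m+1}\le C_{\text{err}}\,b_{m+1}$.

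The second recursion is an estimator reduction $b_{m+1}\le\rho\,b_m+\gamma_m(a_m-a_{m+1})$. I would sum the inequality of Lemma~\ref{lemma_offline} over all $i$ and split the indices into the neighbourhoods selected in Step~3 and the rest. For a selected $\omega_i$ the choice of $s_i$ in Step~4 forces $\lambda_{l_i^m+1}^{(i)}/\lambda_{l_i^{m+1}+1}^{(i)}\le\delta_0$, while for a non-selected one $l_i^{m+1}=l_i^m$ and that ratio equals $1$; together with the marking criterion of Step~3, which gives $\sum_{\text{selected}}(S_i^m)^2\ge\theta^2 b_m$, the first group of terms is bounded by $(1+\alpha)(1-(1-\delta_0)\theta^2)\,b_m$. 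For the second group I would use the finite-overlap bound $\sum_i\|w\|_{\mathcal{L}^2(\omega_i;\kappa^{-1})}^2\le N_{\mathcal{T}}\|w\|_{\mathcal{L}^2(D;\kappa^{-1})}^2$ with $w=v_{\text{ms}}^{m+1}-v_{\text{ms}}^m$, combined with the Pythagoras identity, to rewrite them as $\gamma_m(a_m-a_{m+1})$ with $\gamma_m=(1+\alpha^{-1})N_{\mathcal{T}}\max_i D_m^i$. Fixing $\rho\in(1-(1-\delta_0)\theta^2,1)$ and then $\alpha>0$ small enough that $(1+\alpha)(1-(1-\delta_0)\theta^2)\le\rho$ yields the stated reduction. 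A crucial observation for the last step is that $\{\gamma_m\}$ is non-increasing: enrichment makes each eigenvalue $\lambda_{l_i^m+1}^{(i)}$ non-decreasing and, by the description of $C_{\text{sup}}^i$ in Lemma~\ref{lemma_infsup} as an infimum over $V_{\text{ms}}^{(i)}$, makes each inf-sup constant $C_{\text{sup}}^{i,m}$ non-increasing, so each $D_m^i$ is non-increasing in $m$.

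Finally I would combine the two recursions. Writing $\mu=1/L_j$, splitting $a_{m+1}=\epsilon_j a_{m+1}+(1-\epsilon_j)a_{m+1}$, bounding $(1-\epsilon_j)a_{m+1}\le(1-\epsilon_j)C_{\text{err}}\,b_{m+1}$, then inserting the estimator reduction for $b_{m+1}$ and collecting terms, one obtains
\[
a_{m+1}+\mu b_{m+1}\le\big(\epsilon_j-[(1-\epsilon_j)C_{\text{err}}+\mu]\gamma_m\big)a_{m+1}+[(1-\epsilon_j)C_{\text{err}}+\mu]\gamma_m\,a_m+[(1-\epsilon_j)C_{\text{err}}+\mu]\rho\,b_m .
\]
If $L_j$ is taken large enough that $[(1-\epsilon_j)C_{\text{err}}+1/L_j]\gamma_j\le\epsilon_j$ — which, since $\{\gamma_m\}$ is non-increasing, then holds with $\gamma_m$ for every $m\ge j$, and which is exactly what the definition (\ref{definition_L}) encodes, so that the resulting sequence $\{L_j\}$ can be taken decreasing — the coefficient of $a_{m+1}$ is nonnegative, and $a_{m+1}\le a_m$ gives $a_{m+1}+\mu b_{m+1}\le\epsilon_j a_m+[(1-\epsilon_j)C_{\text{err}}+\mu]\rho\,b_m$. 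The proof then closes by checking $[(1-\epsilon_j)C_{\text{err}}+\mu]\rho\le\mu\epsilon_j$; substituting $\epsilon_j=(C_{\text{err}}L_j+\rho)/(C_{\text{err}}L_j+1)$ and simplifying reduces this to the trivial inequality $(1-\rho)^2\ge0$. I expect the main obstacle to be precisely this bookkeeping: selecting $\rho$, $\alpha$ and the sequence $\{L_j\}$ in a mutually consistent way so that the coefficient of $a_{m+1}$ stays nonnegative for every $m\ge j$ (which is where the monotonicity of the eigenvalues and of $C_{\text{sup}}^{i,m}$ enters) while keeping $\epsilon_j<1$ and reproducing the exact quotient form of $\epsilon_j$ in the statement.
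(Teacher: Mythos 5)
Your first two ingredients --- the Galerkin/Pythagoras identity $a_m = a_{m+1} + \|v_{\text{ms}}^{m+1}-v_{\text{ms}}^m\|^2_{\mathcal{L}^2(D;\kappa^{-1})}$ and the estimator reduction $b_{m+1}\le\rho\, b_m+\gamma_m(a_m-a_{m+1})$ obtained by summing Lemma~\ref{lemma_offline} over marked and unmarked neighbourhoods --- are exactly the paper's steps, including the observation that $\gamma_m=L_m$ is non-increasing in $m$. The gap is in the final combination. You need the coefficient $\epsilon_j-[(1-\epsilon_j)C_{\text{err}}+1/L_j]\gamma_m$ of $a_{m+1}$ to be nonnegative, and you assert that this is ``exactly what the definition (\ref{definition_L}) encodes.'' It is not: (\ref{definition_L}) sets $L_j=\gamma_j$, so the term $\gamma_m/L_j$ already equals $1$ when $m=j$, and your requirement becomes $(1-\epsilon_j)C_{\text{err}}\gamma_j+1\le\epsilon_j$, which is impossible since $\epsilon_j<1$. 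Hence with the paper's $L_j$ the coefficient of $a_{m+1}$ is strictly negative; since $a_{m+1}\le a_m$ you can neither replace $a_{m+1}$ by $a_m$ in that term nor drop it without pushing the coefficient of $a_m$ above $\epsilon_j$. Your route can be rescued only by inflating $L_j$ well beyond $\gamma_j$ (your condition is quadratic in $L_j$ and eventually holds), but that changes $\epsilon_j$ and no longer proves the statement with the sequence defined by (\ref{definition_L}).

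The source of the trouble is that you invoke Lemma~\ref{lemma_upper_bound} at level $m+1$ (bounding $a_{m+1}\le C_{\text{err}}b_{m+1}$) and then must re-expand $b_{m+1}$, which reintroduces $a_{m+1}$ with the wrong sign. The paper invokes it at level $m$ instead: the estimator reduction together with the Pythagoras identity and $\gamma_m\le L_j$ gives directly $a_{m+1}+L_j^{-1}b_{m+1}\le a_m+\rho L_j^{-1}b_m$, and then a fraction $\beta=(1-\rho)/(1+C_{\text{err}}L_j)$ of $a_m$ is converted into $\beta C_{\text{err}}b_m$ via $a_m\le C_{\text{err}}b_m$; this choice of $\beta$ makes both resulting coefficients equal to $\epsilon_j=1-\beta$, with no sign condition to verify. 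Redirecting your last step this way closes the argument with the stated $L_j$.
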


\begin{proof}
In the offline adaptive method, we fixed $0 < \theta < 1$ and we choose an index set $I$ such that
$$ \theta^2 \sum_{i = 1}^{N_e} \eta_i^2 \leq \sum_{i \in I} \eta_i^2$$
We write
$$ \sum_{i = 1}^{N_e} (S_i^{m+1})^2 = \sum_{i \in I} (S_i^{m+1})^2 + \sum_{i \not\in I} (S_i^{m+1})^2.$$
Using Lemma \ref{lemma_offline}, we have
\begin{equation*}
\begin{aligned}
\sum_{i = 1}^{N_e} (S_i^{m+1})^2 &\leq \sum_{i \in I} \left( (1+\alpha) \frac{ \lambda_{l_i^m+1}^{(i)} }{ \lambda_{l_i^{m+1}+1}^{(i)} } (S_i^m)^2 + (1 + \alpha^{-1}) D_m^i \|v_{\text{ms}}^{m+1} - v_{\text{ms}}^m\|^2_{\mathcal{L}^2(\omega_i; \kappa^{-1})} \right) \\
& \quad + \sum_{i \not\in I} \left( (1+\alpha) (S_i^m)^2 + (1 + \alpha^{-1}) D_m^i \|v_{\text{ms}}^{m+1} - v_{\text{ms}}^m\|^2_{\mathcal{L}^2(\omega_i; \kappa^{-1})} \right)
\end{aligned}
\end{equation*}
We define
\begin{equation} \label{definition_L}
 L_{m} = N_{\mathcal{T}}(1+\alpha^{-1}) \max_{E_i \in \mathcal{E}^H} D_m^i,
\end{equation}
and we assume the number of additional offline basis functions in each enrichment level is chosen such that
$$ \max_{i \in I} \frac{ \lambda_{l_i^m+1}^{(i)} }{ \lambda_{l_i^{m+1}+1}^{(i)} } \leq \delta_0 < 1,$$
where $\delta_0$ is a fixed constant.
Using this, we get
$$ \sum_{i = 1}^{N_e} (S_i^{m+1})^2 \leq (1 + \alpha) \sum_{i = 1}^{N_e} (S_i^m)^2 - (1 + \alpha)(1 - \delta_0) \theta^2 \sum_{i = 1}^{N_e} (S_i^m)^2 + L_{m} \|v_{\text{ms}}^{m+1} - v_{\text{ms}}^m\|^2_{\mathcal{L}^2(D; \kappa^{-1})}.$$
We let $\rho = (1 + \alpha)(1 - (1 - \delta_0)\theta^2)$ and take $\alpha$ small enough so that $0 < \rho < 1$. Observed that $\{L_m\}$ is a decreasing sequence, so we may take any $j \leq m$. We now have
\begin{equation} \label{off_conv_1}
\sum_{i = 1}^{N_e} (S_i^{m+1})^2 \leq \rho \sum_{i = 1}^{N_e} (S_i^m)^2 + L_j \|v_{\text{ms}}^{m+1} - v_{\text{ms}}^m\|_{\mathcal{L}^2(D; \kappa^{-1})}.
\end{equation}
Note that $\text{div}(v_{\text{ms}}^{m+1} - v_{\text{ms}}^{m}) = 0$ and $v_{\text{ms}}^{m+1} - v_{\text{ms}}^{m} \in V_{\text{ms}}^{m+1}$. Therefore, by the first equation of (\ref{MsEquation_snapshot}) and (\ref{MsEquation_offline}), we get
$$ \int_D \kappa^{-1} (v_{\text{snap}} - v_{\text{ms}}^{m+1}) \cdot (v_{\text{ms}}^{m+1} - v_{\text{ms}}^{m}) = 0,$$
and so
$$\|v_{\text{snap}} - v_{\text{ms}}^{m}\|_{\mathcal{L}^2(D; \kappa^{-1})}^2
= \|v_{\text{snap}} - v_{\text{ms}}^{m+1}\|_{\mathcal{L}^2(D; \kappa^{-1})}^2 + \|v_{\text{ms}}^{m+1} - v_{\text{ms}}^{m}\|_{\mathcal{L}^2(D; \kappa^{-1})}^2, $$
which means
$$\|v_{\text{ms}}^{m+1} - v_{\text{ms}}^{m}\|_{\mathcal{L}^2(D; \kappa^{-1})}^2
= \|v_{\text{snap}} - v_{\text{ms}}^{m}\|_{\mathcal{L}^2(D; \kappa^{-1})}^2 - \|v_{\text{snap}} - v_{\text{ms}}^{m+1}\|_{\mathcal{L}^2(D; \kappa^{-1})}^2.$$
Putting this into (\ref{off_conv_1}), we get
$$  \|v_{\text{snap}} - v_{\text{ms}}^{m+1}\|_{\mathcal{L}^2(D; \kappa^{-1})}^2 + \frac{1}{L_j} \sum_{i = 1}^{N_e} (S_i^{m+1})^2 \leq \|v_{\text{snap}} - v_{\text{ms}}^{m}\|_{\mathcal{L}^2(D; \kappa^{-1})}^2 + \frac{\rho}{L_j}  \sum_{i = 1}^{N_e} (S_i^m)^2. $$
From Lemma \ref{lemma_upper_bound}, we have
$$ \|v_{\text{snap}} - v_{\text{ms}}^m\|_{\mathcal{L}^2(D; \kappa^{-1})} \leq C_{\text{err}} \sum_{i = 1}^{N_e} (S_i^m)^2. $$
Hence, 
$$  \|v_{\text{snap}} - v_{\text{ms}}^{m+1}\|_{\mathcal{L}^2(D; \kappa^{-1})}^2 + \frac{1}{L_j} \sum_{i = 1}^{N_e} (S_i^{m+1})^2 \leq (1 - \beta) \|v_{\text{snap}} - v_{\text{ms}}^{m}\|_{\mathcal{L}^2(D; \kappa^{-1})}^2 + (\beta C_{\text{err}} + \frac{\rho}{L_j})  \sum_{i = 1}^{N_e} (S_i^m)^2. $$
Finally, we take $\beta = \frac{1 - \rho}{1 + C_{\text{err}} L_j}$ to get
$$  \|v_{\text{snap}} - v_{\text{ms}}^{m+1}\|_{\mathcal{L}^2(D; \kappa^{-1})}^2 + \frac{1}{L_j} \sum_{i = 1}^{N_e} (S_i^{m+1})^2 \leq (1 - \beta) \|v_{\text{snap}} - v_{\text{ms}}^{m}\|_{\mathcal{L}^2(D; \kappa^{-1})}^2 + \frac{1-\beta}{L_j}  \sum_{i = 1}^{N_e} (S_i^m)^2, $$
which completes the proof.
\end{proof}

From Theorem \ref{theorem_offline}, we can see that the convergence rate depends on the two constants $\theta$ and $\delta_0$, which are fixed before we carry out the enrichment algorithm. The constant $\theta$ controls the number of coarse grid neighborhoods, where enrichment is needed. And the constant $\delta_0$ is related to the number of basis functions we have to add in each coarse grid neighborhood. Note that we have the following inequality for the convergence rate
$$ \epsilon_j > 1 - \frac{(1-\delta_0)\theta^2}{C_{\text{err}}L_j + 1}.$$
Thus, in order to have a fast convergence, we need a small $\delta_0$ and a large $\theta$, which means there is a tradeoff between the convergence rate and the number of basis functions used.

Now, we state the convergence result for the online adaptive method.

\begin{theorem} \label{theorem_online}
Using the notations in the online adaptive methods. We have
$$ \|v_{\text{snap}} - v_{\text{ms}}^{m+1}\|_{\mathcal{L}^2(D; \kappa^{-1})}^2 \leq \left( 1 - \frac{\sum_{j=1}^J \|R_{\Omega_j}\|_{\widehat{V}_{\Omega_j}^*}^2}{C_{\text{err}} \sum_{i=1}^{N_e}  \|R_{i}\|_{V_{(\text{snap}}^{(i)})^*}^2 (\lambda_{l_i + 1}^{(i)})^{-1}} \right) \|v_{\text{snap}} - v_{\text{ms}}^m\|_{\mathcal{L}^2(D; \kappa^{-1})}^2.$$
\end{theorem}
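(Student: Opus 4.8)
The plan is to mirror the energy argument in the proof of Theorem~\ref{theorem_offline}: I would track the error $e^m := v_{\text{snap}} - v_{\text{ms}}^m$ in the $\|\cdot\|_{\mathcal{L}^2(D;\kappa^{-1})}$ norm and show that one online iteration removes at least a definite fraction of $\|e^m\|^2$. First I would record the structural facts that hold at every level. Since $v_{\text{ms}}^m$, $v_{\text{ms}}^{m+1}$ and $v_{\text{snap}}$ all have divergence in $Q$ and test equally against $Q$ in the second equations of (\ref{MsEquation_snapshot}) and the online analogue of (\ref{MsEquation_offline}), we get $\mbox{div}(e^m)=0$ and $\mbox{div}(v_{\text{ms}}^{m+1}-v_{\text{ms}}^m)=0$, exactly as in the proof of Lemma~\ref{lemma_upper_bound}. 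Moreover $v_{\text{ms}}^{m+1}$ and $v_{\text{ms}}^m$ carry the same boundary data $g_H$, so $v_{\text{ms}}^{m+1}-v_{\text{ms}}^m \in (V_{\text{ms}}^{m+1})^0 \subseteq V_{\text{snap}}^0$; subtracting the first equations of the two mixed systems and testing with any divergence-free $w\in(V_{\text{ms}}^{m+1})^0$ kills the pressure term and yields $\int_D \kappa^{-1} e^{m+1}\cdot w = 0$. Applying this to $w = v_{\text{ms}}^{m+1}-v_{\text{ms}}^m$ and expanding $e^m = e^{m+1}+(v_{\text{ms}}^{m+1}-v_{\text{ms}}^m)$ gives the Pythagorean identity
$$\|e^m\|_{\mathcal{L}^2(D;\kappa^{-1})}^2 = \|e^{m+1}\|_{\mathcal{L}^2(D;\kappa^{-1})}^2 + \|v_{\text{ms}}^{m+1}-v_{\text{ms}}^m\|_{\mathcal{L}^2(D;\kappa^{-1})}^2,$$
so the theorem reduces to the lower bound $\|v_{\text{ms}}^{m+1}-v_{\text{ms}}^m\|_{\mathcal{L}^2(D;\kappa^{-1})}^2 \ge \big(C_{\text{err}}\sum_{i}\|R_i^m\|_{(V_{\text{snap}}^{(i)})^*}^2(\lambda_{l_i+1}^{(i)})^{-1}\big)^{-1}\,\big(\sum_{j}\|R_{\Omega_j}\|_{\widehat{V}_{\Omega_j}^*}^2\big)\,\|e^m\|_{\mathcal{L}^2(D;\kappa^{-1})}^2$.

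For the lower bound, let $W$ be the subspace of divergence-free functions in $(V_{\text{ms}}^{m+1})^0$. The displayed orthogonality says precisely that $e^{m+1}\perp W$ in the $\kappa^{-1}$-inner product, while $v_{\text{ms}}^{m+1}-v_{\text{ms}}^m\in W$; hence $v_{\text{ms}}^{m+1}-v_{\text{ms}}^m$ is the $\kappa^{-1}$-orthogonal projection of $e^m$ onto $W$, and in particular its squared norm is at least the squared norm of the projection of $e^m$ onto the subspace $\mathrm{span}\{\phi_1,\dots,\phi_J\}\subseteq W$. Since the $\Omega_j$ are non-overlapping and $\phi_j$ is supported in $\Omega_j$, the $\phi_j$ are mutually $\kappa^{-1}$-orthogonal, so that projection norm equals $\sum_j (\int_D\kappa^{-1}e^m\cdot\phi_j)^2/\|\phi_j\|^2_{\mathcal{L}^2(D;\kappa^{-1})}$. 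To evaluate $\int_D\kappa^{-1}e^m\cdot\phi_j$ I would use that $\phi_j\in V_{\text{snap}}^0$ is divergence free, so $\int_D\kappa^{-1}v_{\text{snap}}\cdot\phi_j = \int_D\mbox{div}(\phi_j)p_{\text{snap}} = 0$ by the first equation of (\ref{MsEquation_snapshot}); therefore, using that $\phi_j$ is supported in $\Omega_j$, the reduction $R_{\Omega_j}(v) = \int_{\Omega_j}\kappa^{-1}v_{\text{ms}}^m\cdot v$ on $\widehat{V}_{\Omega_j}$, and the defining equation of $\phi_j$ in Step~3,
$$\int_D\kappa^{-1}e^m\cdot\phi_j = -\int_{\Omega_j}\kappa^{-1}v_{\text{ms}}^m\cdot\phi_j = -R_{\Omega_j}(\phi_j) = -\|\phi_j\|^2_{\mathcal{L}^2(D;\kappa^{-1})}.$$
Each term of the sum then collapses to $\|\phi_j\|^2_{\mathcal{L}^2(D;\kappa^{-1})} = \|R_{\Omega_j}\|_{\widehat{V}_{\Omega_j}^*}^2$, giving $\|v_{\text{ms}}^{m+1}-v_{\text{ms}}^m\|^2_{\mathcal{L}^2(D;\kappa^{-1})} \ge \sum_{j=1}^J \|R_{\Omega_j}\|_{\widehat{V}_{\Omega_j}^*}^2$.

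Finally I would feed in Lemma~\ref{lemma_upper_bound} at level $m$, namely $\|e^m\|^2_{\mathcal{L}^2(D;\kappa^{-1})} \le C_{\text{err}}\sum_i\|R_i^m\|_{(V_{\text{snap}}^{(i)})^*}^2(\lambda_{l_i+1}^{(i)})^{-1}$: this shows $\sum_j\|R_{\Omega_j}\|_{\widehat{V}_{\Omega_j}^*}^2 \ge \big(C_{\text{err}}\sum_i\|R_i^m\|^2(\lambda_{l_i+1}^{(i)})^{-1}\big)^{-1}\big(\sum_j\|R_{\Omega_j}\|_{\widehat{V}_{\Omega_j}^*}^2\big)\|e^m\|^2_{\mathcal{L}^2(D;\kappa^{-1})}$, and combining with the previous step and the Pythagorean identity yields the stated contraction. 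The main obstacle I expect is the bookkeeping in the middle paragraph: verifying carefully that $v_{\text{ms}}^{m+1}-v_{\text{ms}}^m$ is genuinely the orthogonal projection of $e^m$ onto the full divergence-free space $W$ (not merely onto $\mathrm{span}\{\phi_j\}$), so that monotonicity of projection norms applies, and checking that $\phi_j \in V_{\text{snap}}^0$ so the snapshot equation annihilates the $v_{\text{snap}}$ term — this is where the non-overlap of the $\Omega_j$, their being unions of coarse neighborhoods, and the requirement that each coarse neighborhood already carry a non-divergence-free basis function (so the mixed problems are well posed and $\mbox{div}(v_{\text{ms}}^m)=\mbox{div}(v_{\text{snap}})$) all enter.
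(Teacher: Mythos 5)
Your proposal is correct and is essentially the same argument as the paper's: both rest on the $\kappa^{-1}$-orthogonality of $v_{\text{snap}}-v_{\text{ms}}^{m+1}$ to divergence-free members of $V_{\text{ms}}^{m+1}$, the Riesz-representation identity $\int_{\Omega_j}\kappa^{-1}v_{\text{ms}}^m\cdot\phi_j=\|\phi_j\|^2_{\mathcal{L}^2(\Omega_j;\kappa^{-1})}=\|R_{\Omega_j}\|^2_{\widehat{V}_{\Omega_j}^*}$, the mutual orthogonality of the $\phi_j$ from the non-overlap of the $\Omega_j$, and Lemma~\ref{lemma_upper_bound} applied at level $m$. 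The paper packages this by testing the minimality of $\|v_{\text{snap}}-v_{\text{ms}}^{m+1}\|$ against the explicit competitor $v_{\text{snap}}-v_{\text{ms}}^m+\sum_j\alpha_j\phi_j$ rather than via your Pythagorean identity plus monotonicity of projections, but the two formulations are interchangeable.
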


\begin{proof}
For any $v \in V_{\text{ms}}^{m+1}$ with $\mbox{div}(v) = 0$, we have
\begin{equation} 
\begin{aligned}
\|v_{\text{snap}} - v_{\text{ms}}^{m+1} + v\|_{\mathcal{L}^2(D; \kappa^{-1})}^2 &= \|v_{\text{snap}} - v_{\text{ms}}^{m+1}\|_{\mathcal{L}^2(D; \kappa^{-1})}^2 + \|v\|_{\mathcal{L}^2(D; \kappa^{-1})}^2 \\
& \quad + 2\int_D \kappa^{-1} (v_{\text{snap}} - v_{\text{ms}}^{m+1}) \cdot v \\
&= \|v_{\text{snap}} - v_{\text{ms}}^{m+1}\|_{\mathcal{L}^2(D; \kappa^{-1})}^2 + \|v\|_{\mathcal{L}^2(D; \kappa^{-1})}^2 \\
&\geq \|v_{\text{snap}} - v_{\text{ms}}^{m+1}\|_{\mathcal{L}^2(D; \kappa^{-1})}^2 \label{step:prop_online_1}
\end{aligned}
\end{equation}

\noindent
Let the new online basis functions $\phi_1,\, \phi_2 ,\, \dots ,\, \phi_J$ be normalized such that $\|\phi_j\|_{\mathcal{L}^2(\Omega_j; \kappa^{-1})} = 1$. Let $v = v_{\text{ms}}^{m+1} - v_{\text{ms}}^m + \alpha_1 \phi_1 + \dots + \alpha_J \phi_J$ where $\alpha_j = \int_{\Omega_j} \kappa^{-1} v_{\text{ms}}^m \cdot \phi_j = \|R_{\Omega_j}\|_{\widehat{V}_{\Omega_j}^*}$. Check that $v$ is divergence free. Using (\ref{step:prop_online_1}),
\begin{equation*}
\begin{aligned}
\|v_{\text{snap}} - v_{\text{ms}}^{m+1}\|_{\mathcal{L}^2(D; \kappa^{-1})}^2 &\leq \|v_{\text{snap}} - v_{\text{ms}}^m + \alpha_1 \phi_1 + \dots + \alpha_J \phi_J\|_{\mathcal{L}^2(D; \kappa^{-1})}^2 \\
&= \|v_{\text{snap}} - v_{\text{ms}}^m\|_{\mathcal{L}^2(D; \kappa^{-1})}^2 + \|\alpha_1 \phi_1 + \dots + \alpha_J \phi_J\|_{\mathcal{L}^2(D; \kappa^{-1})}^2 \\
& \quad + 2\int_D \kappa^{-1} (v_{\text{snap}} - v_{\text{ms}}^m) \cdot (\alpha_1 \phi_1 + \cdots + \alpha_J \phi_J) \\
&= \|v_{\text{snap}} - v_{\text{ms}}^m\|_{\mathcal{L}^2(D; \kappa^{-1})}^2 + \|\alpha_1 \phi_1 + \dots + \alpha_J \phi_J\|_{\mathcal{L}^2(D; \kappa^{-1})}^2 \\
& \quad - 2\int_D \kappa^{-1} v_{\text{ms}}^m \cdot (\alpha_1 \phi_1 + \cdots + \alpha_J \phi_J) \\
\end{aligned}
\end{equation*}

\noindent
Recall that $\Omega_1,\, \dots ,\, \Omega_J$ are non-overlapping and each $\phi_j$ is supported on $\Omega_j$. Therefore,
\begin{equation*}
\begin{aligned}
& \|\alpha_1 \phi_1 + \dots + \alpha_J \phi_J\|_{\mathcal{L}^2(D; \kappa^{-1})}^2  - 2\int_D \kappa^{-1} v_{\text{ms}}^m \cdot (\alpha_1 \phi_1 + \cdots + \alpha_J \phi_J) \\
&= -(\|R_{\Omega_1}\|_{\widehat{V}_{\Omega_1}^*}^2 + \cdots + \|R_{\Omega_J}\|_{\widehat{V}_{\Omega_J}^*}^2).
\end{aligned}
\end{equation*}
Hence, we have
\begin{equation*}
\begin{aligned}
\|v_{\text{snap}} - v_{\text{ms}}^{m+1}\|_{\mathcal{L}^2(D; \kappa^{-1})}^2 &\leq \|v_{\text{snap}} - v_{\text{ms}}^m\|_{\mathcal{L}^2(D; \kappa^{-1})}^2 - \sum_{j=1}^J \|R_{\Omega_j}\|_{\widehat{V}_{\Omega_j}^*}^2 \\
&= \left( 1 - \frac{\sum_{j=1}^J \|R_{\Omega_j}\|_{\widehat{V}_{\Omega_j}^*}^2}{\|v_{\text{snap}} - v_{\text{ms}}^m\|_{\mathcal{L}^2(D; \kappa^{-1})}^2} \right) \|v_{\text{snap}} - v_{\text{ms}}^m\|_{\mathcal{L}^2(D; \kappa^{-1})}^2 \\
&= \left( 1 - \frac{\sum_{j=1}^J \|R_{\Omega_j}\|_{\widehat{V}_{\Omega_j}^*}^2}{C_{\text{err}} \sum_{i=1}^{N_e}  \|R_{i}\|_{(V_{\text{snap}}^{(i)})^*}^2 (\lambda_{l_i+1}^{(i)})^{-1}} \right) \|v_{\text{snap}} - v_{\text{ms}}^m\|_{\mathcal{L}^2(D; \kappa^{-1})}^2,
\end{aligned}
\end{equation*}
by Lemma \ref{lemma_upper_bound}. Hence the proof is complete. 
\end{proof}

In Theorem \ref{theorem_online}, we can see that the convergence rate of the online adaptive method can be small if the term $(\Lambda_{\min}^j)^{-1}$ is large. This term is determined at the beginning of the method when the initial number of basis functions $l_i$ is fixed. Therefore, we should choose $l_i$ so that $\lambda_{l_i+1}^{(i)}$ is significantly large. We will demonstrate the effect of choosing different initial number of basis functions in the next section.

\section{Numerical Results} \label{section_numerical}

In this section, we will present some examples using both the offline and the online adaptive methods. To test the efficiency of the methods, we will compare the error of the solution from the adaptive methods with the error of the solution obtained by uniform enrichment of the space, i.e. we increase the number of basis functions in all the coarse grid neighborhoods uniformly. We will also compare the error of the solution using the online adaptive method with different initial number of basis functions on each coarse grid neighborhood and different contrast in the permeability field. From the comparison in the online adaptive method, we can see that the convergence rate depends on the initial number of offline bases used. And if the initial basis is chosen in the appropriate way, the convergence of online adaptive method will be independent of the contrast. In the examples, we will use the following permeability fields with background value one.
%\marginpar{Can we use the word ``contrast'' in the figure instead of 10000. This way, we can just refer to contrast.}

\begin{figure}[H]
    \centering
\hfill
    \begin{subfigure}[t]{0.39\textwidth}
        \includegraphics[width=\textwidth]{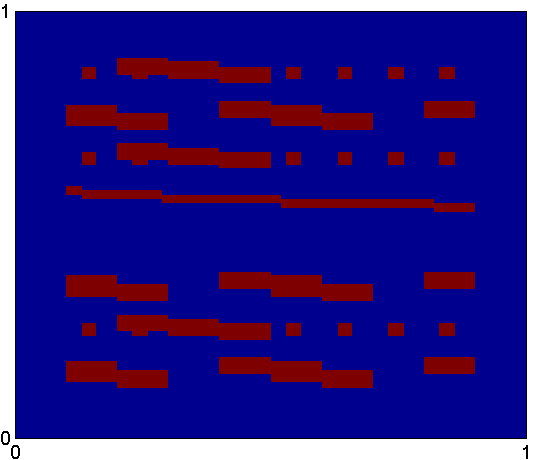}
        \caption{$\kappa_{1}$} \label{fig:permeability_field_1}
    \end{subfigure}
\hfill
    \begin{subfigure}[t]{0.39\textwidth}
        \includegraphics[width=\textwidth]{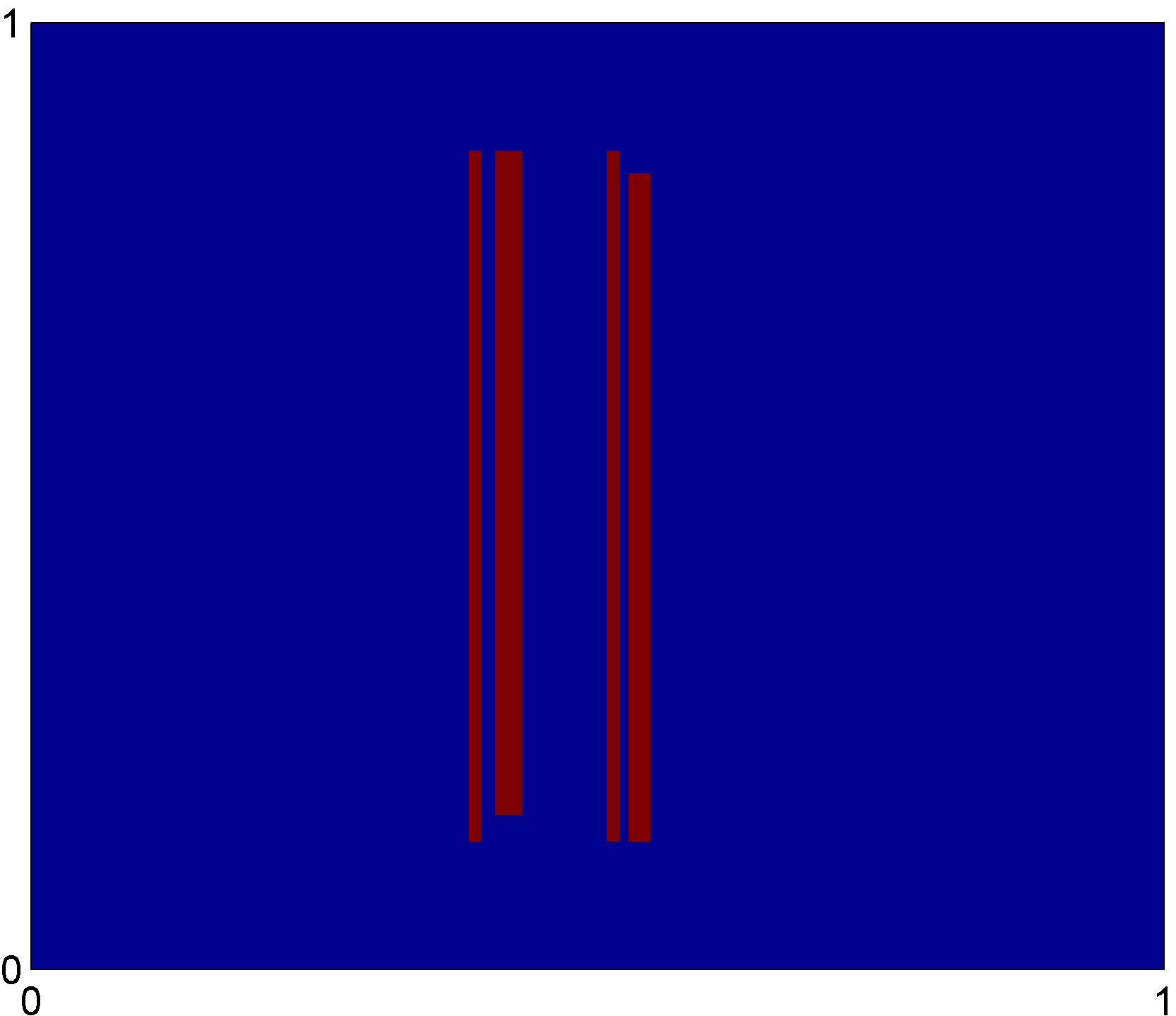}
        \caption{$\kappa_{2}$} \label{fig:permeability_field_2}
    \end{subfigure}
\hfill
\null
    \caption{Permeability fields with high contrasts (denoted in red).}\label{fig:permeability_fields}
\end{figure}

Figure \ref{fig:permeability_fields} shows the permeability fields with background value one (shown in blue) and high contrasts (shown in red). In the numerical examples, we will vary the contrast so as to see the convergence rate of the adaptive methods with different contrast values. We will use the follow snapshot error to indicate the accuracy of the methods.
\begin{equation*}
e = \frac{\| u_{\text{snap}} - u_{\text{ms}} \|_{\mathcal{L}^{2}(D;\kappa^{-1})}}{\| u_{\text{snap}} \|_{\mathcal{L}^{2}(D;\kappa^{-1})}}
\end{equation*}

\subsection{Comparing the adaptive methods with uniform enrichment}
We compare the efficiency of the adaptive methods in this example. Consider equation (\ref{equation}) on the domain $[0,\,1]^{2}$ with homogeneous boundary condition, i.e. $g = 0$. We use coarse grid size $15 \times 15$ and fine grid size $40 \times 40$ on each coarse grid. We use the permeability field $\kappa_{1}$ with contrast values 1e4 and 1e-4. The source function $f$ is set to be $1$ on top left coarse grid block, $-1$ on bottom right coarse grid block and zero elsewhere. We form the snapshot basis using spectral problem 1.

We solve the equation in the following ways.

\begin{description}
\item[Offline adaptive method:] We use the offline adaptive method with initial number of bases per coarse grid neighborhood equal to $2$, $\theta = 0.2$ and $\delta_0 = 0.5$.
\item[Online adaptive method:] We use the online adaptive method with initial basis functions obtained from spectral problem 1. The regions $\Omega_1, \dots ,\, \Omega_J$ are
\begin{enumerate}[(a)]
\item non-overlapping coarse grid neighborhoods; and
\item non-overlapping $2 \times 2$ coarse grid blocks.
\end{enumerate}
\item[Uniform enrichment:] we solve the equation again by increasing the number of bases in each coarse grid neighborhood uniformly from 2 to 40.
\end{description}

The results are compared by plotting the error $e$ against the total number of bases used (Figure \ref{fig:snapshot_error_plot_example_1}).

\begin{figure}[!h]
    \centering
    \begin{subfigure}[t]{0.49\textwidth}
        \includegraphics[width=\textwidth]{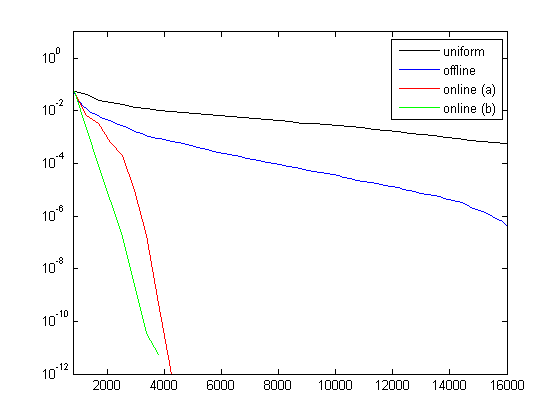}
        \caption{contrast = 1e4} \label{fig:snapshotl_error_plot_example_1_1e4}
    \end{subfigure}
    \begin{subfigure}[t]{0.49\textwidth}
        \includegraphics[width=\textwidth]{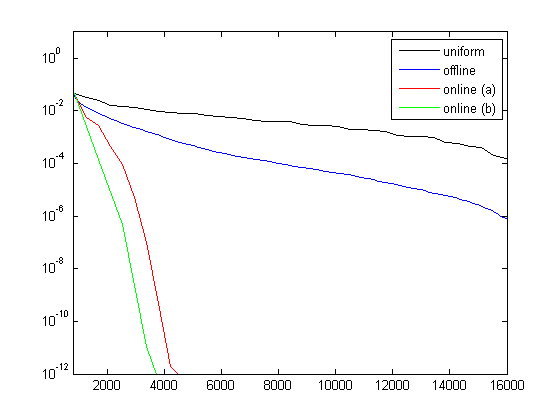}
        \caption{contrast = 1e-4} \label{fig:snapshot_error_plot_example_1_1e-4}
    \end{subfigure}
    \caption{Snapshot error of offline and online adaptive methods compared to the snapshot error of uniform enrichment for contrast 1e4 and 1e-4.}
    \label{fig:snapshot_error_plot_example_1}
\end{figure}

\begin{table}[!h]
    \centering
\hfill
    \begin{subtable}{.4\textwidth}
        \begin{tabular}{|c|c|c|}
             \hline
             DOF & $e$ (uniform) & $e$ (offline) \\ \hline
             2(840) & 0.0567 & 0.0567 \\ \hline
             8(3360) & 0.0123 & 0.0012 \\ \hline
             14(5880) & 0.0066 & 2.71e-4 \\ \hline
             20(8400) & 0.0040 & 7.57e-5 \\ \hline
             26(10920) & 0.0023 & 2.11e-5 \\ \hline
             32(13440) & 0.0011 & 6.01e-6 \\ \hline
	  38(15960) & 5.76e-4 & 4.61e-7 \\
             \hline
        \end{tabular}
        \caption{contrast = 1e4}\label{table:snapshot_error_table_offline_1e4}
    \end{subtable}
    \hfill
    \begin{subtable}{.4\textwidth}
        \begin{tabular}{|c|c|c|}
             \hline
             DOF & $e$ (uniform) & $e$ (offline) \\ \hline
             2(840) & 0.0452 & 0.0452 \\ \hline
             8(3360) & 0.0115 & 0.0017 \\ \hline
             14(5880) & 0.0059 & 2.75e-4 \\ \hline
             20(8400) & 0.0039 & 8.49e-5 \\ \hline
             26(10920) & 0.0019 & 2.98e-5 \\ \hline
             32(13440) & 9.57e-4 & 7.54e-6 \\ \hline
	  38(15960) & 1.60e-4 & 7.84e-7 \\
             \hline
        \end{tabular}
        \caption{contrast = 1e-4}\label{table:snapshot_error_table_offline_1e-4}
    \end{subtable}
\hfill
\null
    \caption{Snapshot error of the offline adaptive method compared to the GMsFEM with the same number of basis functions.}\label{table:snapshot_error_table_offline}
\end{table}

\begin{table}[!h]
    \centering
    \hfill
    \begin{subtable}{.35\textwidth}
        \begin{tabular}{|c|c|c|}
             \hline
             DOF & $e$ (a) & $e$ (b) \\ \hline
             2(840) & 0.0567 & 0.0567 \\ \hline
             3(1260) & 0.0065 & 0.0042 \\ \hline
             4(1680) & 0.0033 & 8.71e-5 \\ \hline
             5(2100) & 6.20e-4 & 4.08e-6 \\ \hline
             6(2520) & 2.56e-4 & 1.74e-7 \\ \hline
             7(2940) & 9.21e-6 & 2.81e-9 \\ \hline
	  8(3360) & 1.90e-7 & 3.42e-11 \\ \hline
	  9(3780) & 4.73e-10 & 5.58e-12 \\
             \hline
        \end{tabular}
        \caption{contrast = 1e4}\label{table:snapshot_error_table_online_1e4}
    \end{subtable}
    \hfill
    \begin{subtable}{.35\textwidth}
        \begin{tabular}{|c|c|c|}
             \hline
             DOF & $e$ (a) & $e$ (b) \\ \hline
             2(840) & 0.0452 & 0.0452 \\ \hline
             3(1260) & 0.0056 & 0.0045 \\ \hline
             4(1680) & 0.0028 & 1.36e-4 \\ \hline
             5(2100) & 4.56e-4 & 5.19e-6 \\ \hline
             6(2520) & 9.57e-5 & 4.94e-7 \\ \hline
             7(2940) & 4.97e-6 & 3.22e-9 \\ \hline
	  8(3360) & 1.03e-7 & 1.14e-11 \\ \hline
	  9(3780) & 4.84e-10 & 6.80e-13 \\
             \hline
        \end{tabular}
        \caption{contrast = 1e-4}\label{table:snapshot_error_table_online_1e-4}
    \end{subtable}
    \hfill
\null
    \caption{Snapshot error of the online adaptive method using two different choices of $\Omega_1, \dots ,\, \Omega_J$.}\label{table:snapshot_error_table_online}
\end{table}

From Figure \ref{fig:snapshot_error_plot_example_1}, we can see that the snapshot error from the adaptive methods is always smaller than the snapshot error obtained from uniformly increasing the number of basis functions. Moreover, the rate of convergence of the online adaptive method is faster than that of the offline adaptive method. Note that the online approaches require computations during the online stage of the simulations. From Table \ref{table:snapshot_error_table_offline}, we can also see the difference in the convergence rate between the offline adaptive method and uniform enrichment. This shows that the error indicator in the offline method can successfully show the coarse grid neighborhoods with insufficient bases. 

For the online adaptive method with the two choices of regions, the one with $2 \times 2$ coarse grid blocks give a faster convergence rate, as observed from the Table \ref{table:snapshot_error_table_online}. This can be explained by the number of coarse grid neighborhoods a region contains. In (a), each region contains one coarse grid neighborhood while in (b), each region contains four. Therefore, the space $\widehat{V}_{\Omega_i}$ for the calculation of the projection $\phi_j$ is larger in (b) than in (a) and captures more distant effects. Hence, the result is better in (b). 

\subsection{Online adaptive method with different number of initial basis functions}
In this example, we focus on the online adaptive method and we want to see the effect of using different number of initial basis functions in the method. We consider permeability fields $\kappa_2$ with the contrast 1e-4. For We divide the domain $[0,\, 1]^2$ into $8 \times 8$ coarse grids and divide each coarse grid into $32 \times 32$ fine grids. The source function $f$ is the same as the previous example. In each enrichment level, the regions $\Omega_1, \dots ,\, \Omega_J$ are chosen to be disjoint coarse grid neighborhoods.

We solve the equation using 1, 2, 3 and 4 initial basis functions obtained from solving the two spectral problems. We plot the snapshot error $e$ against the number of basis functions used in Figure \ref{fig:eg2_error_plot} and the value of $e$ is shown in Tables \ref{table:eg2_error_table_sp1} and \ref{table:eg2_error_table_sp2}. 

\begin{figure}[!h]
    \centering
    \begin{subfigure}[t]{0.49\textwidth}
        \includegraphics[width=\textwidth]{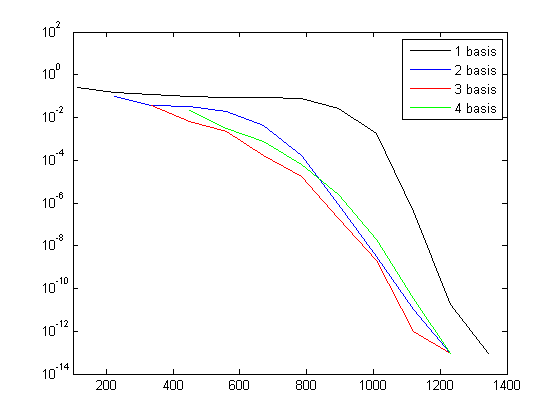}
        \caption{spectral problem 1}
        \label{fig:eg2_error_plot_sp1}
    \end{subfigure}
    \begin{subfigure}[t]{0.49\textwidth}
        \includegraphics[width=\textwidth]{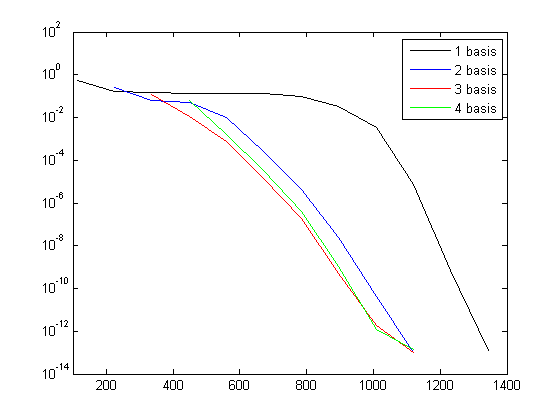}
        \caption{spectral problem 2}
        \label{fig:eg2_error_plot_sp2}
    \end{subfigure}
    \caption{Snapshot error of online adaptive method with different number of initial basis functions.}
    \label{fig:eg2_error_plot}
\end{figure}

\begin{table}[!h]
    \centering
        \begin{tabular}{|c|c|c|c|c|}
            \hline
            DOF & $e$ (1 basis) & $e$ (2 basis) & $e$ (3 basis) & $e$ (4 basis) \\ \hline
            1(112) & 0.2575 & / & / & / \\ \hline
            2(224) & 0.1454 & 0.0928 & / & / \\ \hline
            3(336) & 0.1254 & 0.0371 & 0.0357 & / \\ \hline
            4(448) & 0.0980 & 0.0326 & 0.0062 & 0.0211 \\ \hline
            5(560) & 0.0887 & 0.0200 & 0.0023 & 0.0031 \\ \hline
            6(672) & 0.0880 & 0.0042 & 1.68e-4 & 7.29e-4 \\ \hline
            7(784) & 0.0737 & 1.73e-4 & 1.68e-5 & 6.02e-5 \\ \hline
            8(896) & 0.0269 & 8.65e-7 & 1.87e-7 & 2.56e-6 \\ \hline
            9(1008) & 0.0019 & 3.03e-9 & 1.99e-9 & 2.04e-8 \\ \hline
        \end{tabular}
    \caption{Snapshot error of the online adaptive method using spectral problem 1 with 1 initial bases ($\Lambda_{\text{min}} = 0.0093$), 2 initial bases ($\Lambda_{\text{min}} = 0.0146$), 3 initial bases ($\Lambda_{\text{min}} = 2.5183$) and 4 initial bases ($\Lambda_{\text{min}} = 5.0668$). Contrast is 1e-4.}
\label{table:eg2_error_table_sp1}
\end{table}

\begin{table}[!h]
    \centering
        \begin{tabular}{|c|c|c|c|c|}
            \hline
            DOF & $e$ (1 basis) & $e$ (2 basis) & $e$ (3 basis) & $e$ (4 basis) \\ \hline
            1(112) & 0.5237 & / & / & / \\ \hline
            2(224) & 0.1684 & 0.2616 & / & / \\ \hline
            3(336) & 0.1417 & 0.0612 & 0.1215 & / \\ \hline
            4(448) & 0.1319 & 0.0498 & 0.0116 & 0.0625 \\ \hline
            5(560) & 0.1293 & 0.0098 & 7.71e-4 & 0.0017 \\ \hline
            6(672) & 0.1291 & 2.48e-4 & 1.46e-5 & 3.49e-5 \\ \hline
            7(784) & 0.0954 & 4.39e-6 & 1.84e-7 & 4.10e-7 \\ \hline
            8(896) & 0.0331 & 2.51e-8 & 5.31e-10 & 1.08e-9 \\ \hline
	 9(1008) & 0.0034 & 4.25e-11 & 1.83e-12 & 1.17e-12 \\ \hline
        \end{tabular}
    \caption{Snapshot error of the online adaptive method using spectral problem 2 with 1 initial bases ($\Lambda_{\text{min}} = 0.0016$), 2 initial bases ($\Lambda_{\text{min}} = 0.0247$), 3 initial bases ($\Lambda_{\text{min}} = 0.4939$) and 4 initial bases ($\Lambda_{\text{min}} = 0.7881$).  Contrast is 1e-4.}
\label{table:eg2_error_table_sp2}
\end{table}

From Figure \ref{fig:eg2_error_plot}, one can observe that if we use only 1 initial basis function, the rate of convergence is slow at the beginning. Similar behaviour can be seen if 2 initial basis functions are used, yet the convergence is faster. Using 3 initial basis functions seems to be the optimal choice in the sense that the snapshot error cannot be smaller when more initial basis functions are used. This can be explained by the value $\Lambda_{\text{min}}$. In the online adaptive method, this value depends on the initial basis functions obtained from the spectral problems. Theorem \ref{theorem_online} shows that the rate of convergence is bounded above by a value which decreases when $\Lambda_{\text{min}}$ increases. When spectral problem 1 is used, the values of $\Lambda_{\text{min}}$ are 0.0093, 0.0146, 2.5183 and 5.0068 when 1, 2, 3 and 4 initial basis functions are used respectively. When spectral problem 2 is used, the values of $\Lambda_{\text{min}}$ are 0.0016, 0.0247, 0.4939 and 0.7881 when 1, 2, 3 and 4 initial basis functions are used respectively. Therefore, the values of $\Lambda_{\text{min}}$ corresponding to the first two basis functions are small. This suggests a criterion for choosing the initial number of basis function which is to include all basis functions with small eigenvalue from the spectral problem.

Remark that the magnitude of $\Lambda_{\text{min}}$ depends on the choice of the spectral problem. For spectral problem 1, both the constant $C_{\text{err}}$ and $\Lambda_{\text{min}}$ grow with the ratio $H/h$. For spectral problem 2, $C_{\text{err}}$ is independent of the mesh size and $\Lambda_{\text{min}}$ is always bounded above by 1.

\subsection{Online adaptive method with different contrasts}
Next, we want to see the effect of varying the contrast to the online adaptive method. Similar to the previous example, we will start with different number of initial basis functions. Equation (\ref{equation}) is solved in permeability field $\kappa_1$ with three different contrasts 1e-2, 1e-4 and 1e-6. The coarse grid size is $15 \times 15$ and the fine grid size is $40 \times 40$ on each coarse grid. The source function $f$ is, again, $1$ at top left corner and $-1$ at bottom right corner. For each number of initial basis functions, we plot the snapshot error of the method with the three different contrasts (Figures \ref{fig:eg3_error_plot_sp1} and \ref{fig:eg3_error_plot_sp2}). We also list the value of the snapshot error in Tables \ref{table:eg3_error_table_sp1} and \ref{table:eg3_error_table_sp2}.

\begin{figure}[!h]
    \centering
    \begin{subfigure}{0.47\textwidth}
        \includegraphics[width=\textwidth]{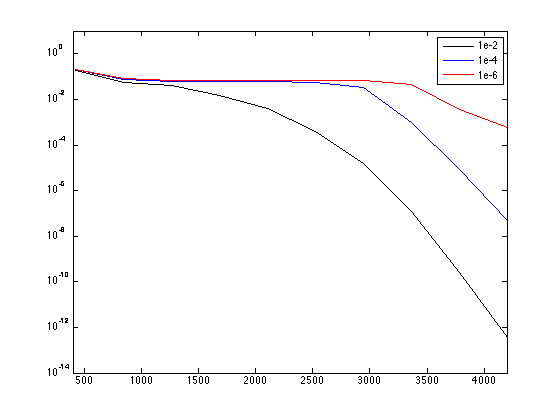}
        \caption{1 initial basis function.  }
        \label{fig:eg3_error_plot_sp1_1}
    \end{subfigure}
    \hfill
    \begin{subfigure}{0.47\textwidth}
        \includegraphics[width=\textwidth]{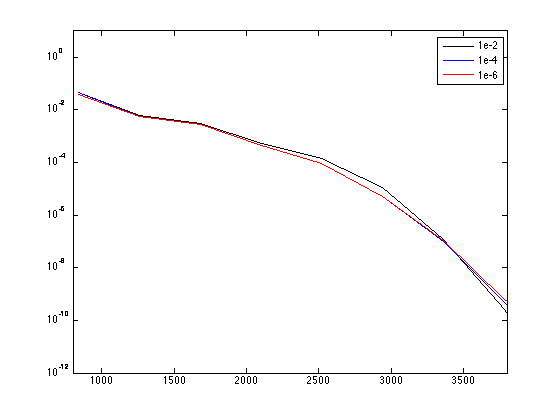}
        \caption{2 initial basis functions. }
        \label{fig:eg3_error_plot_sp1_2}
    \end{subfigure}
    
    \begin{subfigure}{0.47\textwidth}
        \includegraphics[width=\textwidth]{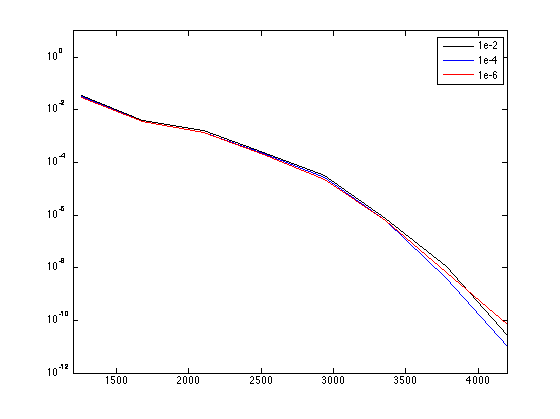}
        \caption{3 initial basis functions. }
        \label{fig:eg3_error_plot_sp1_3}
    \end{subfigure}
    \hfill
    \begin{subfigure}{0.47\textwidth}
        \includegraphics[width=\textwidth]{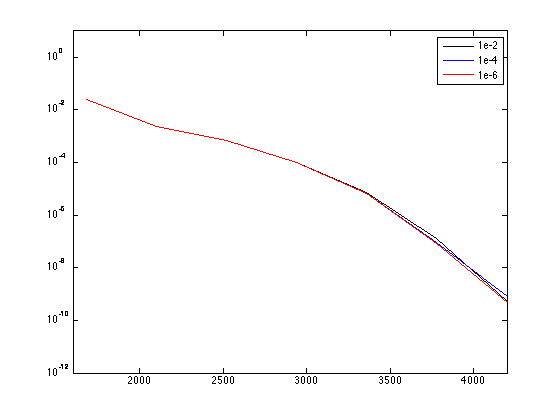}
        \caption{4 initial basis functions. }
        \label{fig:eg3_error_plot_sp1_4}
    \end{subfigure}
    \caption{Snapshot error of online adaptive method using spectral problem 1 with different number of initial basis functions and different contrast. }
    \label{fig:eg3_error_plot_sp1}
\end{figure}

\begin{table}[h!]
    \centering
    \hfill
    \begin{subtable}{.4\textwidth}
    \begin{tabular}{|c|c|c|c|}
        \hline
        DOF & 1e-2 & 1e-4 & 1e-6 \\ \hline
        1(420) & 0.1772 & 0.2039 & 0.2130 \\ \hline
        2(840) & 0.0560 & 0.0753 & 0.0839 \\ \hline
        3(1260) & 0.0399 & 0.0620 & 0.0699 \\ \hline
        4(1680) & 0.0151 & 0.0597 & 0.0676 \\ \hline
        5(2100) & 0.0039 & 0.0595 & 0.0674 \\ \hline
        6(2520) & 3.73e-4 & 0.0567 & 0.0674 \\ \hline
        7(2940) & 1.47e-4 & 0.0317 & 0.0643 \\ \hline
        8(3360) & 1.22e-7 & 0.0010 & 0.0466 \\ \hline
        9(3780) & 2.43e-10 & 8.41e-6 & 0.0035 \\ \hline
    \end{tabular}
    \caption{1 initial basis function. Values of $\Lambda_{\text{min}}$ are 0.0776, 0.0027 and 5.12e-5 for contrasts 1e-2, 1e-4 and 1e-6, respectively.}
    \label{table:eg3_error_table_sp1_1}
    \end{subtable}
    \hfill
    \begin{subtable}{.4\textwidth}
    \begin{tabular}{|c|c|c|c|}
        \hline
        DOF & 1e-2 & 1e-4 & 1e-6 \\ \hline
        2(840) & 0.0472 & 0.0452 & 0.0399 \\ \hline
        3(1260) & 0.0060 & 0.0056 & 0.0054 \\ \hline
        4(1680) & 0.0030 & 0.0028 & 0.0027 \\ \hline
        5(2100) & 5.47e-4 & 4.56e-4 & 4.42e-4 \\ \hline
        6(2520) & 1.41e-4 & 9.57e-5 & 9.04e-5 \\ \hline
        7(2940) & 1.11e-5 & 4.97e-6 & 5.29e-6 \\ \hline
        8(3360) & 1.20e-7 & 1.03e-7 & 1.10e-7 \\ \hline
        9(3780) & 2.67e-10 & 4.84e-10 & 6.16e-10 \\ \hline
    \end{tabular}
    \caption{2 initial basis functions. Values of $\Lambda_{\text{min}}$ are 1.9511, 1.9264 and 1.9262 for  contrasts 1e-2, 1e-4 and 1e-6, respectively. }
    \label{table:eg3_error_table_sp1_2}
    \end{subtable}
    \hfill
    \null
    
    \hfill
    \begin{subtable}{.4\textwidth}
    \begin{tabular}{|c|c|c|c|}
        \hline
        DOF & 1e-2 & 1e-4 & 1e-6 \\ \hline
        3(1260) & 0.0341 & 0.0317 & 0.0303 \\ \hline
        4(1680) & 0.0041 & 0.0037 & 0.0037 \\ \hline
        5(2100) & 0.0016 & 0.0014 & 0.0014 \\ \hline
        6(2520) & 2.40e-4 & 2.15e-4 & 2.02e-4 \\ \hline
        7(2940) & 3.26e-5 & 2.74e-5 & 2.19e-5 \\ \hline
        8(3360) & 7.28e-7 & 6.45e-7 & 6.13e-7 \\ \hline
        9(3780) & 1.15e-8 & 3.86e-9 & 6.83e-9 \\ \hline
    \end{tabular}
    \caption{3 initial basis functions. Values of $\Lambda_{\text{min}}$ are 3.6204, 3.5980 and 3.5978 for  contrasts 1e-2, 1e-4 and 1e-6, respectively.}
    \label{table:eg3_error_table_sp1_3}
    \end{subtable}
    \hfill
    \begin{subtable}{.4\textwidth}
    \begin{tabular}{|c|c|c|c|}
        \hline
        DOF & 1e-2 & 1e-4 & 1e-6 \\ \hline
        4(1680) & 0.0257 & 0.0240 & 0.0239 \\ \hline
        5(2100) & 0.0024 & 0.0024 & 0.0024 \\ \hline
        6(2520) & 6.92e-4 & 6.98e-4 & 6.99e-4 \\ \hline
        7(2940) & 9.78e-5 & 1.01e-4 & 9.92e-5 \\ \hline
        8(3360) & 7.32e-6 & 6.63e-6 & 6.62e-6 \\ \hline
        9(3780) & 1.26e-7 & 9.01e-8 & 8.12e-8 \\ \hline
    \end{tabular}
    \caption{4 initial basis functions. Values of $\Lambda_{\text{min}}$ are 5.2765, 5.2396 and 5.2656 for  contrasts 1e-2, 1e-4 and 1e-6, respectively.}
    \label{table:eg3_error_table_sp1_4}
    \end{subtable}
    \hfill
\null
    \caption{Snapshot error of online adaptive method using spectral problem 1 with different number of initial basis functions and different contrast.}
    \label{table:eg3_error_table_sp1}
\end{table}

From Figure \ref{fig:eg3_error_plot_sp1}, we can see that when spectral problem 1 is used, the change in the contrast has almost no effect on the snapshot error if we start with 2, 3 or 4 basis functions on each coarse grid neighborhood. This can also be confirmed by looking at Table \ref{table:eg3_error_table_sp1}. However, if we use only 1 initial basis function, the contrast makes a huge difference. The convergence rate decreases as the contrast changes from 1e-2 to 1e-4 and then 1e-6. This result can also be explained by the value of $\Lambda_{\text{min}}$. The captions in Table \ref{table:eg3_error_table_sp1} list the values of $\Lambda_{\text{min}}$ corresponding to different number of initial basis functions and different contrast value. These values are consistent with Figure \ref{fig:eg3_error_plot_sp1} since we can observe almost no changes in $\Lambda_{\text{min}}$ as the contrast varies if we use 2, 3 or 4 initial basis functions, while $\Lambda_{\text{min}}$ decreases with the contrast for the case of 1 initial basis function.

\begin{figure}[!h]
    \centering
    \begin{subfigure}{0.47\textwidth}
        \includegraphics[width=\textwidth]{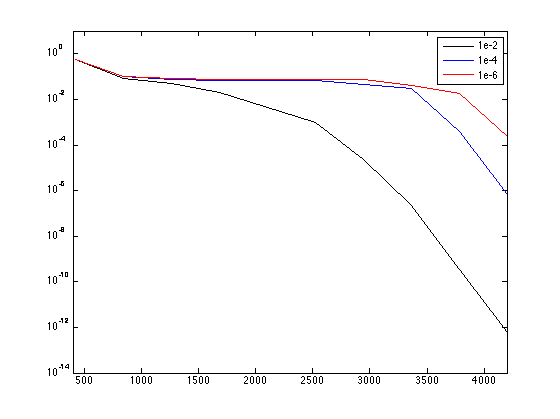}
        \caption{1 initial basis function. }
        \label{fig:eg3_error_plot_sp2_1}
    \end{subfigure}
    \hfill
    \begin{subfigure}{0.47\textwidth}
        \includegraphics[width=\textwidth]{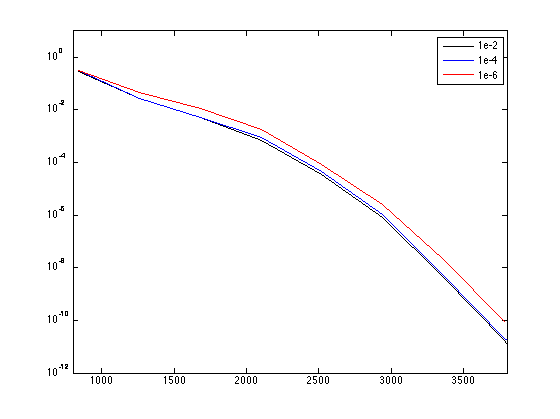}
        \caption{2 initial basis functions.  }
        \label{fig:eg3_error_plot_sp2_2}
    \end{subfigure}
    
    \begin{subfigure}{0.47\textwidth}
        \includegraphics[width=\textwidth]{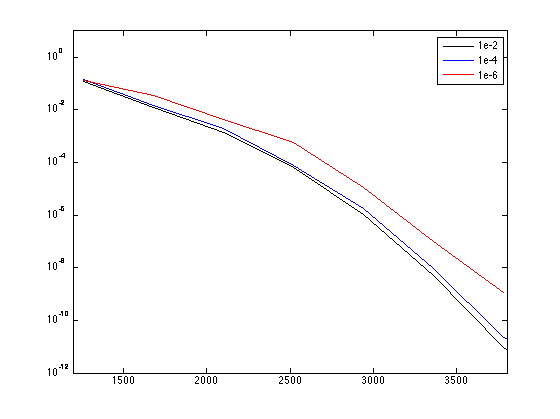}
        \caption{3 initial basis functions.  }
        \label{fig:eg3_error_plot_sp2_3}
    \end{subfigure}
    \hfill
    \begin{subfigure}{0.47\textwidth}
        \includegraphics[width=\textwidth]{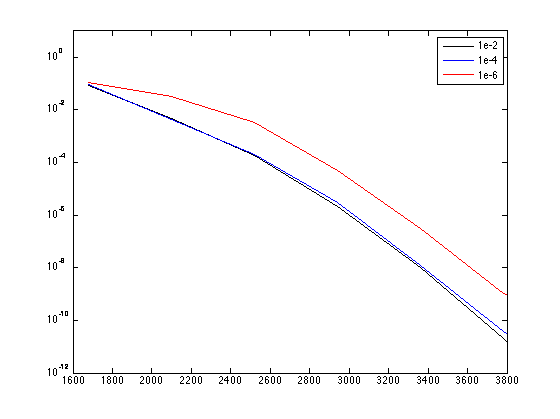}
        \caption{4 initial basis functions.  }
        \label{fig:eg3_error_plot_sp2_4}
    \end{subfigure}
    \caption{Snapshot error of online adaptive method using spectral problem 2 with different number of initial basis functions and different contrast. }
    \label{fig:eg3_error_plot_sp2}
\end{figure}

\begin{table}[h!]
    \centering
    \hfill
    \begin{subtable}{.4\textwidth}
    \begin{tabular}{|c|c|c|c|}
        \hline
        DOF & 1e-2 & 1e-4 & 1e-6 \\ \hline
        1(420) & 0.5506 & 0.5716 & 0.5810 \\ \hline
        2(840) & 0.0799 & 0.0968 & 0.1054 \\ \hline
        3(1260) & 0.0477 & 0.0719 & 0.0803 \\ \hline
        4(1680) & 0.0021 & 0.0692 & 0.0778 \\ \hline
        5(2100) & 0.0045 & 0.0690 & 0.0776 \\ \hline
        6(2520) & 9.73e-4 & 0.0649 & 0.0776 \\ \hline
        7(2940) & 2.23e-5 & 0.0463 & 0.0735 \\ \hline
        8(3360) & 2.20e-7 & 0.0298 & 0.0401 \\ \hline
        9(3780) & 3.42e-10 & 4.1024e-4 & 0.0184 \\ \hline
    \end{tabular}
    \caption{1 initial basis function. Values of $\Lambda_{\text{min}}$ are 0.0510, 9.41e-4 and 4.33e-5 for 1e-2, 1e-4 and 1e-6 respectively. }
    \label{table:eg3_error_table_sp2_1}
    \end{subtable}
    \hfill
    \begin{subtable}{.4\textwidth}
    \begin{tabular}{|c|c|c|c|}
        \hline
        DOF & 1e-2 & 1e-4 & 1e-6 \\ \hline
        2(840) & 0.2909 & 0.3057 & 0.3164 \\ \hline
        3(1260) & 0.0270 & 0.0283 & 0.0477 \\ \hline
        4(1680) & 0.0050 & 0.0052 & 0.0116 \\ \hline
        5(2100) & 6.66e-4 & 9.15e-4 & 0.0017 \\ \hline
        6(2520) & 3.44e-5 & 4.67e-5 & 8.54e-5 \\ \hline
        7(2940) & 8.12e-7 & 1.02e-6 & 2.53e-6 \\ \hline
        8(3360) & 4.13e-9 & 5.20e-9 & 2.06e-8 \\ \hline
        9(3780) & 1.76e-11 & 2.11e-11 & 8.78e-11 \\ \hline
    \end{tabular}
    \caption{2 initial basis functions. Values of $\Lambda_{\text{min}}$ are 0.3176, 0.3823 and 0.3790 for 1e-2, 1e-4 and 1e-6 respectively.}
    \label{table:eg3_error_table_sp2_2}
    \end{subtable}
    \hfill
    \null
    
    \hfill
    \begin{subtable}{.4\textwidth}
    \begin{tabular}{|c|c|c|c|}
        \hline
        DOF & 1e-2 & 1e-4 & 1e-6 \\ \hline
        3(1260) & 0.1202 & 0.1418 & 0.1319 \\ \hline
        4(1680) & 0.0128 & 0.0148 & 0.0357 \\ \hline
        5(2100) & 0.0014 & 0.0020 & 0.0043 \\ \hline
        6(2520) & 6.66e-5 & 7.69e-5 & 5.68e-4 \\ \hline
        7(2940) & 1.02e-6 & 1.73e-6 & 1.14e-5 \\ \hline
        8(3360) & 5.19e-9 & 9.20e-9 & 9.76e-8 \\ \hline
        9(3780) & 9.61e-12 & 2.29e-11 & 1.13e-9 \\ \hline
    \end{tabular}
    \caption{3 initial basis functions. Values of $\Lambda_{\text{min}}$ are 0.6460, 0.4882 and 0.5763 for 1e-2, 1e-4 and 1e-6 respectively.}
    \label{table:eg3_error_table_sp2_3}
    \end{subtable}
    \hfill
    \begin{subtable}{.4\textwidth}
    \begin{tabular}{|c|c|c|c|}
        \hline
        DOF & 1e-2 & 1e-4 & 1e-6 \\ \hline
        4(1680) & 0.0824 & 0.0930 & 0.1128 \\ \hline
        5(2100) & 0.0046 & 0.0044 & 0.0315 \\ \hline
        6(2520) & 1.84e-4 & 1.99e-4 & 0.0035 \\ \hline
        7(2940) & 2.17e-6 & 3.15e-6 & 4.84e-5 \\ \hline
        8(3360) & 1.05e-8 & 1.23e-8 & 3.25e-7 \\ \hline
        9(3780) & 2.02e-11 & 3.65e-11 & 1.02e-9 \\ \hline
    \end{tabular}
    \caption{4 initial basis functions. Values of $\Lambda_{\text{min}}$ are 0.7697, 0.7650 and 0.7307 for 1e-2, 1e-4 and 1e-6 respectively.}
    \label{table:eg3_error_table_sp2_4}
    \end{subtable}
    \hfill
\null
    \caption{Snapshot error of online adaptive method using spectral problem 2 with different number of initial basis functions and different contrast.}
    \label{table:eg3_error_table_sp2}
\end{table}

Similar behaviour can be observed when spectral problem 2 is used. We can observe from Table \ref{fig:eg3_error_plot_sp2} that if we start with one basis function per coarse grid neighborhood, the larger the contrast, the slower is the convergence rate. However, Figure \ref{fig:eg3_error_plot_sp2} also suggests that spectral problem 2 is less resistent to changes in the contrast than spectral problem 1. When 2, 3 or 4 initial basis functions are used, the snapshot error is nearly the same for contrast 1e-2 and 1e-4, yet the error increases as the contrast changes from 1e-4 to 1e-6. This jump is larger for 4 initial basis functions than 2 initial basis functions. Although spectral problem 2 is not as good as spectral problem 1 in the aspect of the resistance to changes in contrast, by comparing Tables \ref{table:eg3_error_table_sp1} and \ref{table:eg3_error_table_sp2}, we can see that the convergence rate is faster using spectral problem 2 than spectral problem 1.

From the above observations, we know that the convergence rate of the online adaptive method can be independent of the contrast if we choose the initial basis functions well. Basis functions corresponding to small value of $\Lambda_{\text{min}}$ are contrast-dependent, and we should include them in the initial basis. When $\Lambda_{\text{min}}$ is large, we may expect that the online adaptive method will perform well with large contrast.

\section{Conclusion}

In this paper, we present two adaptive enrichment algorithms for the mixed GMsFEM. The first one is an offline adaptive method which adds basis computed from the offline stage. This method is based on a local error indicator which is the norm of a residual operator restricted on a local space. Offline basis functions are added to those coarse grid neighborhoods with large errors. The other algorithm is an online adaptive method. Online basis functions are constructed by solving local problems based on a residual operator. We select non-overlapping regions in the domain and solve for an online basis function on each of the regions.

We show theoretically the convergence of the two methods. For the offline method, the rate of convergence depends on two parameters, which control the number of coarse grid neighborhoods to be selected to add basis functions and the number of basis functions to be added. The convergence rate of the online method depends on the eigenvalues corresponding to those basis functions that are not included in the initial basis. The larger the above mentioned eigenvalues, the faster is the convergence rate. The numerical results are consistent with these findings. It is also shown numerically that if the basis functions corresponding to the eigenvalues that are contrast-dependent are included in the initial basis, the online adaptive method will be resistant to the change in the contrast value. Those eigenvalues are the smallest ones and therefore the corresponding basis functions should be included in the initial basis to speed up the convergence.

\nocite{*}

%\printbibliography
\bibliographystyle{plain}
\bibliography{reference_inline}

\end{document}